\newcommand{\cal}{\mathcal}
\newcommand{\<}{\langle}
\newcommand{\R}{\mathbb{R}}
\newcommand{\C}{\mathbb{C}}
\newcommand{\N}{\mathbb{N}}
\newcommand{\te}{\theta}
\newcommand{\al}{\alpha}
\newcommand{\be}{\beta}
\newcommand{\si}{\sigma}
\newcommand{\la}{\lambda}
\newcommand{\ep}{\varepsilon}
\newcommand{\Om}{\Omega}
\newcommand{\p}{\partial}
\newcommand{\ti}{\tilde}
\newcommand{\Ti}{\widetilde}
\newcommand{\To}{\longrightarrow}
\newcommand{\de}{\delta}
\newcommand{\Ga}{\Gamma}
\newcommand{\vphi}{\varphi}
\newcommand{\tr}{\mbox{tr}}
\newcommand{\im}{\mbox{Im}}
\newcommand{\re}{\mbox{Re}}
\numberwithin{equation}{section}
\def \.{{\bf{\cdot}}}
\def \im{{\mbox{Im }}}
\def \re{{\mbox{Re }}}
\def \hpe{{H_{0,\perp}}}
\def \hpa{{H_{0,\parallel}}}
\newtheorem{prop}{Proposition}[section]
\newtheorem{lem}{Lemma}[section]
\newtheorem{thm}{Theorem}[section]
\newtheorem{rem}{Remark}[section]
\newtheorem{defi}{Definition}[section]
\begin{document}

\makeatother
\title[
Magnetic schr\"odinger Resonances and SSF]
{Resonances and spectral shift function 
 for a 
\\magnetic
schr\"odinger operator \vspace{5mm}}
\author{Abdallah Khochman}
\date{\today}
\email{Abdallah.Khochman@math.u-bordeaux1.fr}
\address{ Universit\'e Bordeaux I, Institut de Math\'ematiques, UMR CNRS 5251, 351, cours de la Lib\'eration, 33405 Talence, France}
\maketitle{}

\begin{abstract}
We consider the 3D Schr\"odinger operator $H_0$ with 
constant magnetic field and subject to an electric potential $v_0$
depending only on the variable along the magnetic field $x_3$. The
operator $H_0$ has infinitely many eigenvalues of infinite
multiplicity embedded in its continuous spectrum. We perturb $H_0$ by smooth scalar potentials
$V=O(\<(x_1,x_2)\rangle^{-\de_\perp}\<x_3\rangle^{-\de_\parallel})$,
$\de_\perp>2,\ \de_\parallel>1$. We assume also that $V$ and $v_0$ have an analytic continuation, in
the magnetic field direction, in a complex sector outside a compact set.
We define the resonances of $H=H_0+V$ as the eigenvalues of the
non-selfadjoint operator obtained from $H$ by analytic 
 distortions of $\R_{x_3}$. 
 We study their distribution near any fixed real eigenvalue of $H_0$, $2bq+\la$ for $q\in\N$.
In a ring centered at $2bq+\la$ with radiuses $(r,2r)$, 
 we establish an 
upper bound, as $r$ tends to $0$, of the number of resonances. This
upper bound depends on the decay of $V$ at infinity only in the directions $(x_1,x_2)$. 
Finally, we deduce a representation of the derivative of the
spectral shift function (SSF) for the operator pair ($H_0,H$) in
terms of resonances.
 This representation justifies the Breit-Wigner
approximation and implies a local trace formula.
\end{abstract}

\maketitle{ {\it Mathematics classification}: 35P25, 35J10, 47F05,
81Q10.

\medskip
{\it Keywords}: Electromagnetic Schr\"odinger operator, Resonances,
Embedded eigenvalue,

Spectral shift function, Breit-Wigner approximation, Trace formula.
}

\section{Introduction}
The resonance theory for non-relativistic particles satisfying the
Schr\"odinger equation has been developed  following  several
approaches. Among them we can mention the analytic dilation (see
Aguilar-Combes \cite{JAJC}) or the analytic distortion (see Hunziker
\cite{WH}) and meromorphic continuation of the resolvent or of the
scattering matrix (see Lax-Philips \cite{PLRP} and Vainberg \cite{BRV}). For Schr\"odinger
operators with constant magnetic field, 
 the resonances can be defined by analytic dilation (only) with respect to the
variable along the magnetic field (see Avron-Herbst-Simon
\cite{JAIHBS}, Wang \cite{XPW},
Astaburuaga-Briet-Bruneau-Fern\'andez-Raikov \cite{MAPBVBCFGR}) 
and by meromorphic continuation of the resolvent (see
J.F.Bony-Bruneau-Raikov \cite{JBVBGR}).
\medskip

The link between the resonances and the spectral shift function
(SSF) by the so-called Breit-Wigner approximation has been developed
in different situations. Such a representation of the derivative of
the spectral shift function related to the resonances, implies trace
formulas. In the semi-classical regime we can mention Sj\"ostrand
\cite{JS1}, \cite{JS2}
, Petkov-Zworski
\cite{VPMZ}, J.F.Bony-Sj\"ostrand \cite{JBJS}, Bruneau-Petkov
\cite{VBVP} and Dimassi-Zerzeri \cite{MDMZ} for the Schr\"odinger
operator and \cite{AK} for the Dirac operator. In \cite{JBVBGR},
J.-F.Bony, Bruneau and Raikov obtain a Breit-Wigner approximation of
the
spectral shift function near a Landau level 
 for the $3$D Schr\"odinger operator with constant magnetic field.
 For the last operator, under more general assumptions, Fern\'andez-Raikov \cite{CFGR} studied the singularities of the spectral shift
 function at a Landau level. These singularities has been analysed
at eigenvalues of infinite multiplicity by Astaburuaga, Briet, Bruneau, Fern\'andez and Raikov
\cite{MAPBVBCFGR} for a 
 magnetic Schr\"odinger operator having electric potential
 depending (only) on the variable along the magnetic field.
\medskip

 In this paper we consider the 
magnetic
Schr\"odinger operator $H_0$ 
with an electromagnetic field introduced in \cite{MAPBVBCFGR}. We
suppose that the magnetic field is constant and that the electric
potential $v_0$ depends only on the variable $x_3$ and is analytic
outside a compact set. This operator is remarkable because of the
generic presence of infinitely many eigenvalues of infinite
multiplicity, embedded in the continuous spectrum of $H_0$. We
perturb the operator $H_0$ by a smooth scalar potential $V$ analytic
outside a compact set with respect to the variable $x_3$.
\medskip

The purpose of this work is to define the resonances of the
electromagnetic Schr\"odinger operator $H=H_0+V$ for analytic
perturbation outside a compact set in the third direction $x_3$. We
define the resonances for $H$ as the discrete eigenvalues of the
non-selfadjoint operator $H_\te$ obtained from the 
magnetic Schr\"odinger operator by a general class of complex
distortions of $\R_{x_3}$. 
In Section \ref{secdistortion}, we prove that the discrete
eigenvalues of $H_\te$ are the zeros of a regularized determinant
$\det_2(\cdot)$ which is independent of the distortion. This
justifies the definition of the resonances. We calculate the
essential spectrum of the distorted operator to determine the sector
where we can define the resonances. In Section \ref{secmajoration},
we establish an upper bound for the number of resonances of $H$ in a
domain of size $r\to0$ near
 an embedded eigenvalue of $H_0$. The
second goal of this work is to obtain a Breit-Wigner approximation
for the derivative of the spectral shift function
$\xi(\la)$ related to the resonances of the 
 operator $H$, as well as a local trace formula 
(see Section \ref{sectSSF}).
\section{Assumptions and results}\label{secassumption}
In this section, we summarize some 
spectral properties of the 3D Schr\"odinger operator $H_0$ with
constant magnetic field ${\bf B}=(0,0,b)$, $b>0$ and subject to a
non-constant electric field $E=-(0,0,v_0'(x_3))$ depending only on
the variable $x_3$ (see \cite{MAPBVBCFGR}). We also state the main
results. Let
\begin{eqnarray}
 H_0=\hpe\otimes I_{\parallel}+I_{\perp}\otimes \hpa,
\end{eqnarray}
where $I_\parallel$ and $I_\perp$ are the identity operators in $L^2(\R_{x_3})$ and $L^2(\R_{x_1,x_2}^2)$ respectively,
\begin{eqnarray}
 \hpe:=\left(i\frac{\p}{\p x_1}-\frac{bx_2}{2}\right)^2+\left(i\frac{\p}{\p x_2}+\frac{bx_1}{2}\right)^2 -b,\ \ \ (x_1,x_2)\in\R^2,
\end{eqnarray}
is the Landau Hamiltonian shifted by the constant $b$, self-adjoint
in $L^2(\R^2)$, and
\begin{eqnarray}
 \hpa:=-\frac{d^2}{dx_3^2}+v_0,\ \ \ x_3\in\R.
\end{eqnarray}
The operator $v_0$ is the multiplication operator by an one
dimensional scalar potential $v_0(x_3)$. We suppose that
$v_0\in L^\infty(\R)$
 and satisfies
\begin{eqnarray}\label{eq decroissance v0}
|v_0|=O(\langle x_3\rangle^{-\de_0}),
\end{eqnarray}
with $\<x\rangle=(1+|x|^2)^\frac{1}{2}$ and $\de_0>1$. Then using
Weyl theorem, we have
\begin{eqnarray}
 \si_{ess}(\hpa)=\si_{ess}(-\frac{d^2}{dx_3^2})=[0,+\infty[.
\end{eqnarray}
It is well known that the spectrum of the operator $\hpe$ consists
of the Landau levels $2bq$, $q\in\N:=\{0,1,2\dots\}$, and the
multiplicity of each eigenvalue $2qb$ is infinite (see
\cite{JAIHBS}).
 Consequently, 
 the eigenvalues of $H_0$ have the form $2bq+\la$ where $q\in\N$ and
 $\la$ is an eigenvalue of the one dimensional Schr\"odinger operator $H_{0,\parallel}=-\frac{d^2}{dx_3^2}+v_0(x_3)$.
 For simplicity, throughout the article we suppose also
 that
\begin{eqnarray}\label{eqinfspectre}
 \mbox{inf}\;\si(\hpa)>-2b.
\end{eqnarray}
Note that, (\ref{eqinfspectre}) holds true if $v_0>-2b$. The
eigenvalues of $H_0$, $2bq+\la,\ q\in\N^*$, are embedded in its
continuous spectrum
 $[0,+\infty[=\cup_{q=0}^\infty[2bq,\infty[$ and are
 of infinite multiplicity. 
\medskip


Now, we introduce the perturbed operator $H=H_0+V$ where $V$ is the
multiplication operator by the potential $V(x)$. Assume that
$V\in
L^\infty(\R^3)$ 
and satisfies
\begin{eqnarray}\label{eqdecroissance de V}
|V(x)|=O(\<X_\perp\rangle^{-\de_\perp}\<x_3\rangle^{-\de_\parallel}),
\ \ X_\perp=(x_1,x_2),
\end{eqnarray}
with 
$\de_\perp >2$ and $
\de_\parallel >1$.
 We suppose also that $V$ and $v_0$ have 
 holomorphic extensions in the magnetic field 
 direction $x_3$ in the sector
\begin{eqnarray}\label{secteurC0ep}
 C_{\epsilon,0}:=\{z\in\C; |\im(z)|\leq\epsilon|\re(z)|,\ \ |\re(z)|\geq R_0>0\},\ \ \mbox{for}\
 0<\epsilon<1,
\end{eqnarray} and
 satisfy respectively (\ref{eqdecroissance de V}) and (\ref{eq decroissance v0})
 for $x_3\in C_{\epsilon,0}$.
\medskip

For $\te\in D_\epsilon\cap\R$ with $D_\epsilon:=\{\te\in\C; \ |\te|\leq
r_\epsilon:=\frac{\epsilon}{\sqrt{1+\epsilon^2}}\}$, we denote
\begin{eqnarray*}
H_\te&:=&(I_{\perp}\otimes U_\te)H(I_{\perp}\otimes U_\te^{-1})=H_{0,\te}+V_\te,
\end{eqnarray*}
where
\begin{equation}\label{eqH0teta}H_{0,\te}:=(I_{\perp}\otimes U_\te)
H_0(I_{\perp}\otimes U_\te^{-1})=\hpe\otimes
I_{\parallel}+I_{\perp}\otimes \hpa(\te)
\end{equation}
 and  $\hpa(\te)=U_\te\hpa
U_\te^{-1}$ (see (\ref{definitionofUteta}) for the definition of $U_\te$).
 We will prouve in the next section that the operator $H_\te$ has an analytic extension for $\te\in D_\epsilon$.
\medskip

For $\te_0$ fixed in $D_\epsilon^+:=D_\epsilon\cap\{\te\in\C;\
\im(\te)\geq0\}$, $q\in\N$ and $r\in\R$, we define
\begin{eqnarray}
 \Ga_{r,\te_0}:=2br+(1+\te_0)^{-2}[0,+\infty[
\end{eqnarray}
and
\begin{eqnarray}
 S_{q,\te_0}:=
\bigcup_{q<r<q+1}\Ga_{r,\te_0}.
\end{eqnarray}
The spectrum of $H_{0,\te_0}$ is purely essential and we have
\begin{eqnarray}\label{eqspectreH0teta}
\si(H_{0,\te_0})=\si_{ess}(H_{0,\te_0})&=&\bigcup_{q\in\N}\left(2bq+\si(\hpa(\te_0))\right)
\\\nonumber &=&\bigcup_{q\in\N}\left(\Ga_{q,\te_0}\cup\left(2bq+\si_{disc}(\hpa(\te_0))
\right)\right),
\end{eqnarray}
where
$\si_{disc}(\hpa(\te_0))=\si_{disc}(\hpa)\cup\{z_1,z_2,\dots\}$,
$\si_{disc}(\hpa)$ denotes the discrete spectrum of $\hpa$ and
$z_1,z_2,\dots$ are the complex eigenvalues of $\hpa(\te_0)$. In the
following, we assume that $\si_{disc}(\hpa)=\{ \la \}$. Note that
$\la$ is necessarily simple.
\medskip

The essential spectrum of $H_\te$ coincides with that of
$H_{0,\te}$.
 We prove also that the discrete spectrum of $H_\te$ in $S_{\te}=\bigcup_{q\in \N} S_{q,\te}$
is independent of $\te$ in $D_\epsilon^+$ (i.e. for two values
$\te_1,\,\te_2$,
the discrete spectrum of $H_{\te_1}$ and $H_{\te_2}$ coincide on $S_{\te_1}\cap S_{\te_2}$), (see Proposition \ref{propresonance=zero}). 
This justifies the following definition.
\begin{defi}
 The resonances of $H$ in $S_{\te_0}$ are the discrete eigenvalues of $H_{\te_0}$. The multiplicity of a resonance $z_0$ is defined by
\begin{eqnarray}\label{def mult res}
 \emph{mult}(z_0):=\emph{rank}\frac{1}{2i\pi}\int_{\Ga_0}(z-H_{\te_0})^{-1}dz,
\end{eqnarray}
 where $\Ga_0$ is a small positively oriented circle centered at $z_0$. We will denote $\emph{Res}(H)$ the set of resonances.
\end{defi}

\begin{rem}
The resonances of $H$ in $\{z\in\C;\ \ \emph{Re}(z)<0\}$ are the
real discrete eigenvalues of $H$. 
\end{rem}
\begin{center}
{\large
\psfrag{S}[lt][][1][0]{$S_{\te_0}$}
\psfrag{*}[c][][1][0]{\begin{small}$\star $\end{small}}
\psfrag{0}[c][][1][0]{$0$} \psfrag{Omega}[c][][1][0]{$\Omega_q$}
\psfrag{la}[c][][1][0]{$\la$}
\psfrag{2bq}[c][][1][0]{$2b$}
\psfrag{2bq+la}[c][l][1][0]{$2b\!+\!\!\la$}
\psfrag{2b(q+1)}[c][ll][1][0]{$2bq$}
\psfrag{2b(q+1)+la}[c][0][1][0]{$2bq\!+\!\!\la$}
\psfrag{Re\(z\)}[c][][1][0]{$\mbox{Re}(z)$}
\psfrag{Im\(z\)}[c][][1][0]{$\mbox{Im}(z)$}
\psfrag{Ga0te}[lt][][1][0]{$\Gamma_{0,\te_0}$}
\psfrag{Ga1te}[lt][][1][0]{$\Gamma_{1,\te_0}$}
\psfrag{Gaqte}[lt][][1][0]{$\Gamma_{q,\te_0}$}
\psfrag{e}[lt][][1][0]{$\circ
$}
\psfrag{h}[lt][][1][0]{$\circ
$}
 \psfrag{e.g. of H}[lt][][1][0]{e.v.
of $H_{\te_0}$} \psfrag{titre}[c][][1][0]{Fig.1. The set
$S_{\te_0}$} \psfrag{Resonances of H}[l][][1][0]{Resonances of $H$}
\hspace{1.7cm}\includegraphics[height=9cm, width=15cm]{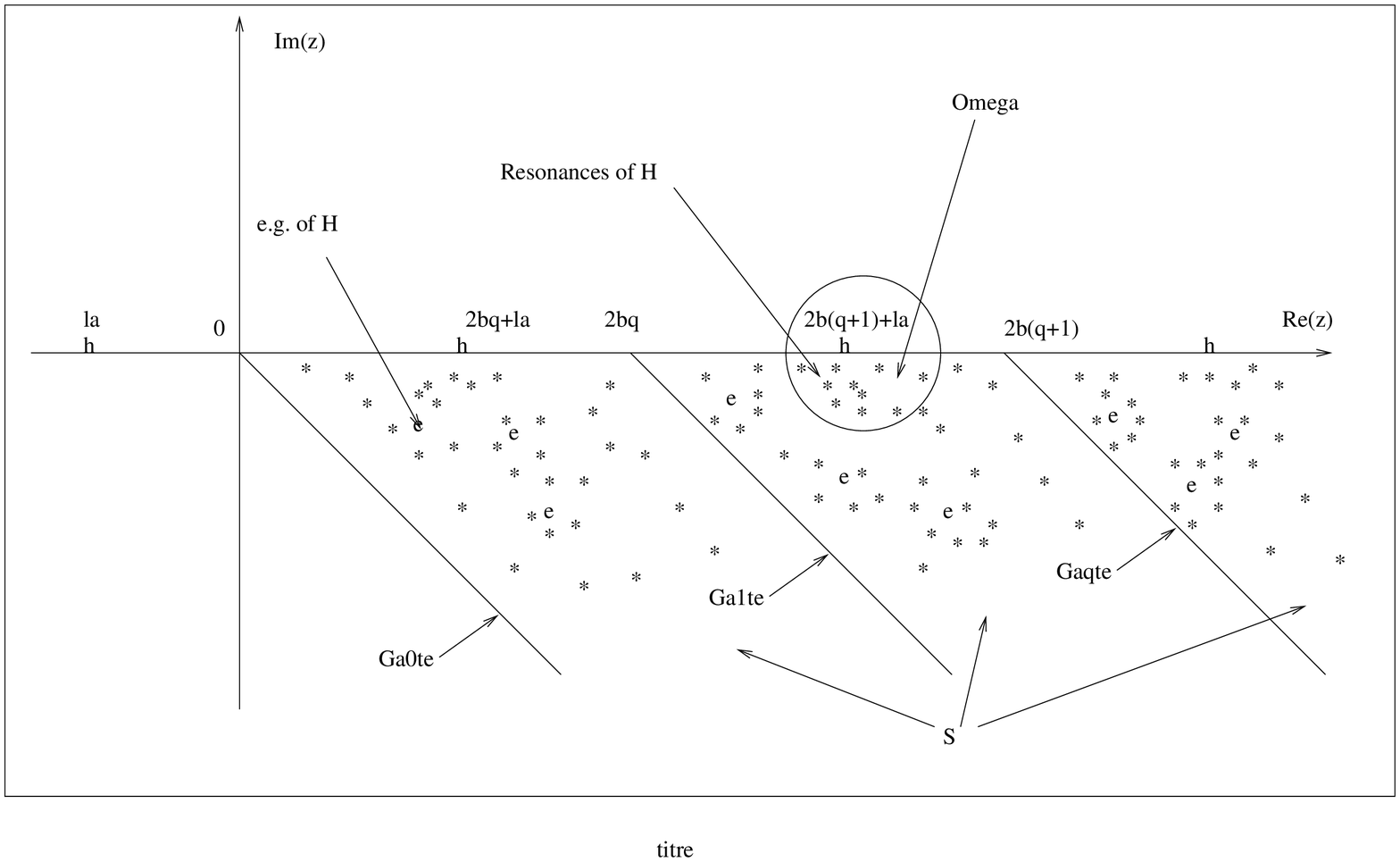} }
\end{center}
\vspace{5mm} Now, we state an upper bound as $r\to0$ on the number of
resonances of $H$ in a ring 
in $\Om_q$ with radiuses $(r,2r)$ and centered at $2bq+\la$,
$q\in\N^*$ fixed.
\begin{thm}\label{Thm upper bound}\emph{\bf[Upper bound]} Suppose that $V$ 
 and $v_0$ satisfy the above hypotheses. Then there
exist $r_0>0$ and $\nu>0$, such that, for any $0<r<r_0$,
\begin{equation}
\#\{z\in \emph{Res}(H)\cap\Om_q; r<|z-2bq-\la|<2r\}=O(n_+(r,\nu
p_qWp_q)|\ln r|),
\end{equation}
where $W=\sup_{x_3\in C_{\epsilon,0}}|\langle
x_3\rangle^{\de_\parallel} V|$, $p_q$ is the orthogonal projection
onto $\cal{H}_q:=\mbox{ker}(H_{0,\perp}-2bq)$ and $n_+(r,p_qWp_q)$
is the counting function of the eigenvalues larger than $r$ of the
Toeplitz operator $p_qWp_q$.  In particular, under our assumption we
have always $n_+(r,p_qWp_q)=O(r^{-2/\de_{\perp}})$.
\end{thm}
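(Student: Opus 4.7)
The plan is to follow the framework of \cite{JBVBGR}: realize the resonances as zeros of a regularized determinant, isolate the singular contribution of the embedded eigenvalue at $z_0 := 2bq+\la$, and count the zeros by a Jensen-type argument.

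By the Birman-Schwinger reduction of Section \ref{secdistortion}, the resonances in $S_{\te_0}$ coincide (with multiplicity) with the zeros of
\[D(z) := {\det}_2\bigl(I + J|V_{\te_0}|^{1/2}(z - H_{0,\te_0})^{-1}|V_{\te_0}|^{1/2}\bigr),\]
where $J$ is the polar factor of $V_{\te_0}$. Near $z_0$ the decomposition
\[(z - H_{0,\te_0})^{-1} = \frac{p_q \otimes \Pi_\la}{z-z_0} + R_q(z),\]
with $\Pi_\la$ the rank-one eigenprojector of $\hpa(\te_0)$ at the simple eigenvalue $\la$ and $R_q$ holomorphic near $z_0$, yields $K(z) := J|V_{\te_0}|^{1/2}(z-H_{0,\te_0})^{-1}|V_{\te_0}|^{1/2} = (z-z_0)^{-1}M + L(z)$, with $L$ analytic and bounded on a disk of radius $\rho>0$ around $z_0$, and $M$ trace class. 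Since $\Pi_\la$ has rank one and the $x_3$-decay of $V_{\te_0}$ absorbs the eigenfunction $\psi_\la$ in a bounded integral, the singular values of $M$ are controlled by those of $\nu p_q W p_q$ for some $\nu>0$ independent of $q$.

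For a threshold $\mu \sim r$, split $M = M_N + R_N$ with $\mathrm{rank}\,M_N \le N := n_+(\mu, \nu p_q W p_q)$ and $\|R_N\| \le \mu$. Writing $D(z)$ as a product of a uniformly bounded $\det_2$ factor (absorbing $L(z) + (z-z_0)^{-1}R_N$, whose Hilbert-Schmidt norm on the annulus $\{r \le |z-z_0| \le 3r\}$ is $O(1)$ when $\mu \sim r$) and a rational factor with at most $N$ zeros coming from the rank-$N$ singular part, the counting reduces to Jensen's formula applied to $D$ on the disk $\{|z-z_0| \le 3r\}$. Combined with the standard estimate $|\det_2(I+K)| \le \exp(C\|K\|_{S_2}^2)$, this produces a majoration of order $N|\ln r|$ on the number of zeros in the sub-annulus $\{r < |z-z_0| < 2r\}$.

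The main obstacle is to secure the lower bound $\ln|D(z^*)| \ge -CN|\ln r|$ at a reference point $z^*$ at distance $\sim r$ from $z_0$ chosen in the resolvent set of $H_{\te_0}$. The contribution of the rank-$N$ piece is explicit: $\bigl|\prod_{j \le N}(1 + \mu_j(M)/(z^*-z_0))\bigr| \ge \prod_{j \le N}(\mu_j(M)/(2r)) \ge (\mu/(2r))^N$, whose logarithm is exactly $-O(N|\ln r|)$ when $\mu \sim r$; the complementary $\det_2$ factor is uniformly bounded below by analytic Fredholm theory, provided $z^*$ is kept away from $\si(H_{\te_0})$, which is possible because the essential spectrum $S_{\te_0}$ leaves a cone at $z_0$ in which one can select $z^*$. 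Finally, the assertion $n_+(r, p_q W p_q) = O(r^{-2/\de_\perp})$ follows from the classical Weyl-type eigenvalue asymptotics of Raikov-Warzel for Toeplitz operators on the $q$-th Landau level with symbol decaying as $\langle X_\perp\rangle^{-\de_\perp}$, namely $\mu_j(p_q W p_q) = O(j^{-\de_\perp/2})$.
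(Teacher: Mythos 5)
Your overall architecture is the same as the paper's: symmetrized Birman--Schwinger operator, extraction of the singular term $-\eta^{-1}\tau_q$ built from $p_q\otimes p_\parallel(\te)$, spectral cutoff of $B_q=W^{1/2}p_qW^{1/2}$ at level $\sim r/\nu$ producing a finite-rank part of rank $n_+(r,\nu p_qWp_q)$ plus a small-norm remainder, and a Jensen-type count. However, there is a genuine gap at the decisive step, the \emph{lower} bound at the reference point. First, the inequality $\bigl|\prod_{j\le N}(1+\mu_j(M)/(z^*-z_0))\bigr|\ge\prod_{j\le N}\mu_j(M)/(2r)$ is false: when $\mu_j(M)/(z^*-z_0)$ is close to $-1$ the factor $|1+\mu_j/(z^*-z_0)|$ can be arbitrarily small (this happens precisely when $\mu_j\sim|z^*-z_0|\sim r$, which is the regime you are in), and in any case the determinant of $I+(z^*-z_0)^{-1}M_N(I+\mathcal{E})^{-1}$ is not a product over singular values of $M$, since $M$ carries the non-self-adjoint factor $M_\te$ and is composed with $(I+\mathcal{E})^{-1}$. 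Second, ``analytic Fredholm theory with $z^*$ away from $\si(H_{0,\te_0})$'' cannot rescue this: the smallness of $D(z^*)$ is governed by the proximity of $z^*$ to the \emph{resonances of $H$} (the zeros you are trying to count), not to the spectrum of the unperturbed distorted operator, and you have no a priori localization of those zeros. As stated, your argument gives no $r$-uniform lower bound.

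The paper closes this gap by moving the reference region into the physical half-plane: for $\im z>r$ the determinant $\mathcal{D}_\te(z,r)$ coincides with $\mathcal{D}_0(z,r)$ (Proposition \ref{prop tild D teta coincide}), and there one has the explicit identity $(I+\mathcal{T}_{V,0}(z))^{-1}=I-W^{1/2}\langle x_3\rangle^{-\de_3}M(H-z)^{-1}\langle x_3\rangle^{-\de_3}W^{1/2}$ with $H$ \emph{self-adjoint}, whence $\|(I+K_0)^{-1}\|=O(1+|\im z|^{-1})=O(r^{-1})$; writing $\det(I+K_0)^{-1}=\det(I-K_0(I+K_0)^{-1})$ with a rank-$O(N)$, norm-$O(r^{-2})$ operator yields $|\mathcal{D}_0(z,r)|\ge C\exp(-CN|\ln r|)$ on $\{\im\eta>r\}$, which is exactly what the Jensen lemma (boundary integral plus one interior reference point) requires. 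You need this step, or some substitute a priori resolvent bound for $H_\te$ itself, to make the proof work. Two further minor corrections: the complementary factor should be handled in operator norm (the paper arranges $\|\mathcal{E}_\te\|<3/4$ and applies Jensen to the finite-rank determinant $\mathcal{D}_\te$, not to the full $\det_2$), since the Hilbert--Schmidt norm of $\eta^{-1}\tau_{q,1}$ on the annulus is not $O(1)$; and the bound $n_+(r,p_qWp_q)=O(r^{-2/\de_\perp})$ is Raikov's Theorem 2.6 of \cite{GR} (Lemma \ref{lenpqUpqdecroissane polonomyale}), not Raikov--Warzel, which covers the exponential and compactly supported cases.
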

The counting function $n_+(r,p_qWp_q):=\mbox{rank }{\bf1}_{(r,+\infty)}(p_qWp_q)$ satisfies asymptotic
relations depending on the decay of $W$ at infinity.
 The following three
lemmas give an upper bound of $n_+(r,p_qWp_q)$ in the case
power-like decay, exponential decay, or compact support of $W$,
respectively. For more precise results concerning the asymptotic
properties, we refer to the cited theorem.

\begin{lem}\label{lenpqUpqdecroissane polonomyale}\emph{(Theorem 2.6 of \cite{GR})}
Let the function $U \in L^\infty(\R^2)
$ satisfy the estimate
$$
U(X_\perp)\leq C\langle X_\perp\rangle^{-\al},\ \ \ \ \ \ 
X_\perp\in\R^2,$$ for some $\al > 0$. 
 Then for each $q \in \N$, we have
$$n_+(r, p_qUp_q) 
=O(r^{-2/\al})
.$$
\end{lem}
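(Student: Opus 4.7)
My plan is to reduce to an explicit diagonalization made possible by the rotational symmetry of the Landau Hamiltonian. First, since $0\le U(X_\perp)\le U_*(X_\perp):=C\langle X_\perp\rangle^{-\al}$ pointwise, we have $0\le p_qUp_q\le p_qU_*p_q$ as quadratic forms on $\HH_q$, so the min-max principle gives $n_+(r,p_qUp_q)\le n_+(r,p_qU_*p_q)$. Hence it suffices to prove the estimate for the radial majorant $U_*$.

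Next, I would exploit the $SO(2)$-invariance of $\hpe$ and $U_*$. In polar coordinates $X_\perp=(r\cos\te,r\sin\te)$, the Landau level $\HH_q$ admits an orthonormal basis of joint eigenfunctions of $\hpe$ and the angular momentum $L_3=-i\p_\te$ of the form
\begin{equation*}
\phi_{q,k}(r,\te)=c_{q,k}\,e^{ik\te}\,r^{|k|}\,L_q^{|k|}(br^2/2)\,e^{-br^2/4},
\end{equation*}
where $k$ ranges over integers with $q+k\ge 0$, $L_q^{|k|}$ is the generalized Laguerre polynomial, and $c_{q,k}>0$ is the $L^2$-normalizing constant. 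Since $U_*$ depends only on $r$, the Toeplitz operator $p_qU_*p_q$ becomes diagonal in this basis with eigenvalues
\begin{equation*}
\mu_{q,k}=c_{q,k}^{\,2}\int_0^{+\infty}U_*(r)\,r^{2|k|+1}\bigl(L_q^{|k|}(br^2/2)\bigr)^2 e^{-br^2/2}\,dr.
\end{equation*}

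I would then invoke Laplace-type asymptotics for the Laguerre weight: for fixed $q$, the probability density $r\mapsto c_{q,k}^{\,2}\,r^{2|k|+1}\bigl(L_q^{|k|}(br^2/2)\bigr)^2 e^{-br^2/2}$ concentrates, as $|k|\to\infty$, on an interval of bounded width around the classical radius $r_k:=\sqrt{2|k|/b}$. Feeding this concentration into the integral gives $\mu_{q,k}\asymp U_*(r_k)\asymp |k|^{-\al/2}$, and hence
\begin{equation*}
n_+(r,p_qU_*p_q)=\#\{k:\mu_{q,k}>r\}=O(r^{-2/\al}),
\end{equation*}
which is the desired estimate.

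The main technical obstacle is precisely this concentration property of the Laguerre densities; once the mass of $|\phi_{q,k}|^2$ is localized near $r_k$, the polynomial decay of $U_*$ transfers directly into polynomial decay of the eigenvalues. For $q=0$ the argument collapses to an elementary Gaussian moment computation, since the basis is $\phi_{0,k}\propto z^k e^{-b|z|^2/4}$ with $z=x_1+ix_2$. For $q\ge 1$, I would appeal to the standard Plancherel--Rotach-type uniform asymptotics of $L_q^{|k|}$ on its oscillation interval, with the complementary contribution absorbed by the Gaussian tail; both are classical and yield the same $k^{-\al/2}$ behaviour. The non-radial case is then handled purely by the domination argument of the first paragraph.
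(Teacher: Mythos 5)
The paper offers no proof of this lemma --- it is quoted as Theorem 2.6 of \cite{GR} --- so there is no internal argument to compare with; judged on its own, your proposal is correct in strategy and is essentially the standard route to such Toeplitz-operator bounds (close in spirit to the arguments of Raikov--Warzel used for Lemmas \ref{lenpqUpqdecroissane exponentiel} and \ref{lenpqUpqsupportcompact}). Three small points. (i) The hypothesis is only the one-sided bound $U\le C\langle X_\perp\rangle^{-\al}$; positivity of $U$ is not assumed, but your domination step does not need it, since Glazman's lemma gives $n_+(r,A)\le n_+(r,B)$ whenever $A\le B$, so simply drop the claim $0\le U$. (ii) Because only an upper bound on $n_+$ is asserted, you do not need the two-sided concentration $\mu_{q,k}\asymp U_*(r_k)$, hence no Plancherel--Rotach asymptotics: the one-sided bound $\mu_{q,k}=O(k^{-\al/2})$ follows by splitting the radial integral at $r=r_k/2$, using $U_*\le C(r_k/2)^{-\al}\lesssim k^{-\al/2}$ on the outer piece (where the density integrates to at most $1$) and, on the inner piece, the crude bound $\bigl(L_q^{k}(t)\bigr)^2\le C_q(1+t)^{2q}\binom{q+k}{q}^2$ together with the elementary Gamma-tail estimate $\frac{q!}{(q+k)!}\int_0^{k/4}t^{k+2q}e^{-t}\,dt=O(e^{-ck})$; this keeps the whole proof elementary and avoids the ``main technical obstacle'' you flag. (iii) A bookkeeping slip: within the $q$-th Landau level the radial factor for angular momenta $-q\le k<0$ is $L_{q-|k|}^{|k|}$ rather than $L_q^{|k|}$, but only finitely many such $k$ occur for fixed $q$, so the count is unaffected. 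The cited theorem of \cite{GR} is proved by variational methods and yields exact asymptotics $n_+(r,p_qUp_q)\sim \mathrm{const}\cdot r^{-2/\al}$ under a two-sided asymptotic hypothesis on $U$; your argument trades that precision for a self-contained proof of exactly the one-sided bound the lemma states, which is all that Theorem \ref{Thm upper bound} uses.
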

\begin{lem}\label{lenpqUpqdecroissane exponentiel}\emph{(Theorem 2.1 of \cite{GRSW})} Let
$
U \in L^\infty(\R^2)$. Assume that
$$\limsup_{|X_\perp|\to\infty}\frac{\ln U(X_\perp)}{|X_\perp|^{2\be}}
<0,\ \ \ \ \  
X_\perp\in\R^2,$$ for some $\be>0
$ (with the convention $\ln(u)=-\infty$ if $u\leq0$). Then for each $q \in
\N$, we have
$$n_+(r, p_qUp_q) =O(
 \vphi_\be(r))$$
where, for $0<r<e^{-1}$,
\begin{equation}
 \vphi_\be(r):=\left\{
\begin{array}{ll}
|\ln r|^\frac1\be&\ \ \ \ \ \ \ \emph{if}\ \ \ \ 0<\be<1,\\
|\ln r|&\ \ \ \ \ \ \ \emph{if}\ \ \ \ \be=1,\\
(\ln|\ln r|)^{-1}|\ln r|&\ \ \ \ \ \ \ \emph{if}\ \
\ \ \be>1.
\end{array}
\right.
\end{equation}

\end{lem}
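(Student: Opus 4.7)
The plan is to transfer the problem to the Fock (Segal--Bargmann) realisation of the lowest Landau subspace and then apply Berezin--Toeplitz techniques. First I would reduce to the case $q=0$: the Landau subspaces $\cal H_q = p_q L^2(\R^2)$ are related by the creation/annihilation operators of the Landau Hamiltonian, and conjugation by these operators turns $p_q U p_q$ into $p_0 \Ti U p_0$ for some $\Ti U$ obtained from $U$ by a differential expression of order $\leq 2q$. Such derivatives obey a bound of the same super-exponential type (with possibly smaller constants), so it suffices to treat $q=0$. Under the standard unitary equivalence, $\cal H_0$ becomes the Fock space $\cal F$ of entire functions square-integrable against $e^{-b|z|^2/2}\,dx\,dy$, and $p_0 U p_0$ identifies with the Berezin--Toeplitz operator $T_U$.

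The main step is a quasi-classical comparison of the eigenvalues of $T_U$ with the values of the Berezin symbol $\Ti U(z)=\langle U\psi_z,\psi_z\rangle$, where $\psi_z$ is the normalised coherent state. Combining the sampling/interpolation properties of $\cal F$ on a lattice $\Lambda\subset\R^2$ of mesh $\sim b^{-1/2}$ with the min--max principle, one obtains
\begin{equation*}
n_+(r,T_U)\ \leq\ C\,\#\{\lambda\in\Lambda:\Ti U(\lambda)>r/C\}.
\end{equation*}
Since $\Ti U$ is a Gaussian convolution of $U$, for $0<\be\leq 1$ it still decays like $e^{-c|z|^{2\be}}$ (the Gaussian kernel matches or dominates the decay of $U$). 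Solving $e^{-c|\lambda|^{2\be}}>r$ on $\Lambda$ then yields $|\lambda|\leq C|\ln r|^{1/(2\be)}$, giving $O(|\ln r|^{1/\be})$ lattice points for $0<\be<1$ and $O(|\ln r|)$ points for $\be=1$.

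The main obstacle is the regime $\be>1$. There the Gaussian smoothing washes out the super-Gaussian decay of $U$, so the naive Berezin bound only yields $|\ln r|$; yet the actual rate $(\ln|\ln r|)^{-1}|\ln r|$ coincides with that for compactly supported symbols and reflects the intrinsic rigidity of $\cal F$ rather than the precise decay of $U$. The sharper bound is obtained by splitting $\cal F$ according to the monomial basis $\{z^k\}$ into degrees $k<N$ and $k\geq N$: the first piece gives a rank bound $N$, while on the second piece the Gaussian off-diagonal bounds of the reproducing kernel of $p_0$ combined with the decay of $U$ force the eigenvalues $\mu_k$ of $T_U$ to decay as fast as $e^{-ck\ln k}$; inverting $|\ln r|\sim ck\ln k$ iteratively produces $k\sim |\ln r|/\ln|\ln r|$. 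Carrying out this dimension-versus-decay balance rigorously while preserving holomorphicity, as done in Raikov--Warzel, is the technically delicate part.
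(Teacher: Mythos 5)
This lemma is not proved in the paper at all: it is quoted verbatim as Theorem~2.1 of Raikov--Warzel \cite{GRSW}, so there is no internal proof to compare against. Judged on its own merits, your sketch does identify the correct landscape (Fock-space realisation of $\mathcal{H}_0$, quasi-classical counting for $0<\be\leq1$, and the dimension-versus-eigenvalue-decay balance $\mu_k\lesssim e^{-ck\ln k}$ that produces $|\ln r|/\ln|\ln r|$ for $\be>1$), and this is indeed the architecture of the cited proof. But two steps, as written, are genuine gaps. First, the reduction to $q=0$ by conjugating with creation/annihilation operators and claiming that the resulting symbol is ``$U$ acted on by a differential expression of order $\leq 2q$ with the same decay'' is not legitimate: $U$ is only assumed to be in $L^\infty(\R^2)$, so it cannot be differentiated, and even for smooth $U$ a pointwise bound $U\leq e^{-c|X_\perp|^{2\be}}$ implies nothing about derivatives of $U$. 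The reduction in \cite{GRSW} is done instead through the monotonicity of $n_+(r,p_qUp_q)$ in the symbol: one sandwiches $U$ between radial majorants/minorants with the same decay rate, for which $p_qUp_q$ diagonalises explicitly in the angular-momentum basis with eigenvalues given by Laguerre-type integrals, and all three regimes of $\vphi_\be$ are read off from the asymptotics of those integrals.

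Second, the inequality $n_+(r,T_U)\leq C\,\#\{\la\in\Lambda:\Ti U(\la)>r/C\}$ is asserted from ``sampling/interpolation plus min--max'' but is precisely the technical content of the quasi-classical regime; an upper bound on a counting function by values of the Berezin transform on a lattice does not follow from min--max alone (min--max gives lower bounds on $n_+$ from test functions much more readily than upper bounds), and this is again where the radial-comparison argument does the real work. The $\be>1$ discussion is a correct description of the mechanism but is explicitly left unexecuted. So the proposal is a plausible roadmap consistent with \cite{GRSW}, not a proof; if you intend to use the lemma in the present paper, citing \cite{GRSW} as the author does is the right course.
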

\begin{lem}\label{lenpqUpqsupportcompact} \emph{(Theorem 2.4 of \cite{GRSW})} Let $  U \in L^\infty(\R^2)$. Assume that the support of $U$ is compact. 
 Then for
each $q \in \N$, we have
$$n_+(r, p_qUp_q) = O(\vphi_\infty(r))
,$$
where, for $0<r<e^{-1}$,
$$\vphi_\infty(r) := (\ln | \ln r|)^{-1}|\ln r|.$$
\end{lem}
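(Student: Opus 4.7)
The plan is to follow the standard argument (that of the cited paper \cite{GRSW}): reduce the estimate to a Toeplitz operator with a radial indicator symbol, diagonalize it explicitly using rotational invariance, and apply Stirling's formula to the resulting Laguerre integrals.

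First, I would reduce to an indicator symbol. Choosing $R>0$ with $\mathrm{supp}(U) \subset B_R := \{X_\perp \in \R^2 : |X_\perp| \leq R\}$ and setting $M = \|U\|_\infty$, one has $U \leq M \mathbf{1}_{B_R}$ as bounded selfadjoint multiplication operators, so $p_q U p_q \leq M\, p_q \mathbf{1}_{B_R} p_q$. Weyl monotonicity of the eigenvalue counting function then gives
\begin{equation*}
n_+(r, p_q U p_q) \leq n_+(r/M,\, p_q \mathbf{1}_{B_R} p_q),
\end{equation*}
and since $\vphi_\infty(r/M) = (1+o(1))\vphi_\infty(r)$ as $r\to 0$, it suffices to bound the right-hand side.

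Next, I would diagonalize $p_q\mathbf{1}_{B_R}p_q$ explicitly. Because $\mathbf{1}_{B_R}$ is radial and $p_q$ commutes with rotations, both operators are jointly diagonal in an orthonormal basis of $\HH_q$ consisting of common eigenfunctions of $\hpe$ and of the angular momentum. In polar coordinates $(\rho,\vphi)$ this basis takes the familiar form
\begin{equation*}
\psi_{q,k}(\rho,\vphi) = c_{q,k}\, e^{ik\vphi}\, \rho^{|k|}\, L_q^{(|k|)}\!\bigl(b\rho^2/2\bigr)\, e^{-b\rho^2/4}, \qquad k \geq -q,
\end{equation*}
with an explicit normalization $c_{q,k}$ involving $(|k|+q)!$. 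The nonzero eigenvalues of $p_q \mathbf{1}_{B_R} p_q$ are then
\begin{equation*}
\mu_{q,k} = \int_{B_R} |\psi_{q,k}|^2 \, dX_\perp = \frac{q!}{(q+|k|)!}\int_0^{bR^2/2} s^{|k|}\bigl(L_q^{(|k|)}(s)\bigr)^2 e^{-s}\, ds,
\end{equation*}
after the substitution $s = b\rho^2/2$.

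For $q$ fixed, $L_q^{(|k|)}$ is a polynomial of degree $q$ in $s$ whose coefficients are polynomial in $|k|$, so the inner integral is bounded by $C_q (1+|k|)^{2q} (bR^2/2)^{|k|+1}$; Stirling's formula then gives $\ln \mu_{q,k} = -|k|\ln|k| + O(|k|)$ as $|k|\to\infty$. Consequently $\mu_{q,k} > r$ forces $|k|\ln|k| \leq |\ln r| + O(|k|)$, i.e.\ $|k| \leq C\,|\ln r|/\ln|\ln r|$ for all sufficiently small $r$. Counting the admissible indices yields
\begin{equation*}
n_+(r, p_q\mathbf{1}_{B_R} p_q) = O\bigl(|\ln r|/\ln|\ln r|\bigr) = O(\vphi_\infty(r)),
\end{equation*}
which together with the first step completes the proof.

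The main obstacle is the uniform-in-$k$ analysis of the Laguerre-weighted integral $\mu_{q,k}$. The super-exponential decay rate $e^{-|k|\ln|k|}$ is driven entirely by the factorial $((q+|k|)!)^{-1}$ coming from the Laguerre normalization; controlling the polynomial-in-$|k|$ prefactor produced by $L_q^{(|k|)}$ and verifying that it contributes only an $O(|k|)$ correction after taking logarithms is what delivers the double-logarithm factor in $\vphi_\infty$ with the correct sharpness.
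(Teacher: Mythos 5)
The paper offers no proof of this lemma: it is quoted verbatim as Theorem 2.4 of the cited Raikov--Warzel paper \cite{GRSW}. Your argument is correct and is essentially the standard proof of that cited result: domination $p_qUp_q\leq \|U\|_\infty\, p_q\mathbf{1}_{B_R}p_q$ together with Weyl monotonicity, explicit diagonalization of $p_q\mathbf{1}_{B_R}p_q$ in the angular-momentum basis of Laguerre type, and the factorial decay $\mu_{q,k}\leq C_q(1+|k|)^{2q}(bR^2/2)^{|k|+1}q!/(q+|k|)!$ giving $|k|\lesssim |\ln r|/\ln|\ln r|$; only the upper bound on $\ln\mu_{q,k}$ (not the asymptotic equality you state) is actually needed, and the finitely many indices $-q\leq k<0$ contribute $O(1)$.
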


\medskip

Now, we study the spectral shift function (SSF) for the pair
$(H,H_0)$. The SSF $\xi(\la)$ for a pair of self-adjoint operators
($H,\,H_0$) is
a distribution in ${\cal D}'(\R)$
whose derivative is
\begin{equation}\label{eq def xi'}
\xi':f\in C_0^\infty(\R)\longmapsto-\tr\left(f(H)-f(H_0)\right).
\end{equation}
In our case, $|V|^\frac12(H_0+i)^{-1}$ 
is in the Hilbert-Schmidt class, (\ref{eq def
xi'}) is well defined and the SSF $\xi(\la)$ is a function in $L_{loc}^1(\R)$.\medskip

We will see further that the resonances of $H$ in $S_{\te_0}$ are
the zeros of the holomorphic extension of
$$z\in\{z\in\C,\;\im z>0\}\longmapsto D(z)=\mbox{det}_2((H-z)(H_0-z)^{-1})$$
into $S_{\te_0}$ 
 (see (\ref{eq de definition de det_2}) for the definition of $\det_2$).
Thus in order to obtain a link between the SSF and the resonances,
it will be convenient to introduce the regularized spectral shift
function
\begin{equation}\label{eq definition de xi2}
\xi_2(\nu)=\frac1\pi\lim_{\ep\to0^+}\arg
\mbox{det}_2\left((H-\nu-i\ep)(H_0-\nu-i\ep)^{-1}\right),
\end{equation}
whose derivative is the following distribution (see \cite{JBVBGR})
\begin{equation}\label{eqxi'2
dep} \xi_2':f\in
C_0^\infty(\R)\longmapsto-\tr\left(f(H)-f(H_0)-\frac{d}{d\ep}f(H_0+\ep
V)|_{\ep=0}\right).
\end{equation}
We will deduce the properties of the SSF from
those of the regularized SSF using the relation 
\begin{eqnarray}
\xi'=\xi_2'+\frac1\pi\mbox{Im tr}\left(
V_\te(H_{0,\te}-z)^{-2}\right).
\end{eqnarray}
%

We represent now, the derivative of the spectral shift function near
$2bq+\la$ as a sum of a harmonic measure related to the resonances
and the imaginary part of a holomorphic function.

Let $\Ti \Om\subset\subset\Om$ be open relatively compact subsets of
$\C\setminus\{0\}$.
 We assume that these sets are independent of $r$
and that $\Ti\Om$ is simply connected. Also assume that the
intersections between $\Ti\Om$ and $\R$ 
is a non-empty interval $I$. 
\begin{thm}\label{thm Breit-Wigner}\emph{\bf[Breit-Wigner approximation]}
We suppose $V$ and $v_0$ satisfy the above hypothesis.
 For
$\Ti\Om\subset\subset\Om$ and $I$ as above, there exists a function
$g$ holomorphic in $\Om$, such that for $\mu\in2bq+\la+rI$, we have
\begin{eqnarray*}
\xi'(\mu)=\frac{1}{\pi r} \emph{Im}\,
g'(\frac{\mu-2bq-\la}{r},r)-\hspace{-2mm}\sum_{\begin{array}{c}
\scriptstyle w\in \textmd{\emph{Res}}(H)\cap2bq+\la+r\Om \\
\scriptstyle \textmd{\emph{Im}}\,  w\neq0
\end{array}}\hspace{-2mm}\frac{-\emph{Im} w}{\pi|\mu-w|^2}-
\hspace{-2mm}\sum_{w\in
\textmd{\emph{Res}}(H)\cap2bq+\la+rI}\hspace{-2mm}\de(\mu-w)
\end{eqnarray*}
where $g(z,r)$ satisfies the estimate
\begin{equation}
g(z,r)=O\left(n_+(r,\nu p_qWp_q)|\ln r|+\Ti n_1(r/\nu)+\Ti
n_2(r/\nu)\right)=O(|\ln r|r^{-\frac{2}{\de_\perp}}),\ \ \ \nu>0,
\end{equation}
uniformly with respect to $0<r<r_0$ and $z\in \Ti\Om$, with $\Ti
n_p,\ p=1,\ 2,$ defined by 
\begin{equation}\label{eq de n tilde p}
\Ti
n_p(r):=\left\|\frac{p_qWp_q}{r}{\bf1}_{[0,r]}(p_qWp_q)\right\|_p^p,\
\ \ r>0.
\end{equation}
Here, $\|\cdot\|_p
$ stands for the trace-class norms $(p=1)$ and
Hilbert-Schmidt norms $(p=2)$.
\end{thm}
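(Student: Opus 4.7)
The plan is to derive the representation from a Weierstrass-type factorisation of the regularised determinant $D(z):=\mbox{det}_2\bigl((H-z)(H_0-z)^{-1}\bigr)$ on a disk of radius $r$ around $2bq+\la$, exploiting Theorem~\ref{Thm upper bound} to count its zeros and Hilbert--Schmidt estimates to control the non-zero factor. First I reduce to the regularised SSF via
\begin{equation*}
\xi'(\mu) \;=\; \xi_2'(\mu) \;+\; \frac{1}{\pi}\,\im\,\tr\bigl(V_\te(H_{0,\te}-\mu-i0)^{-2}\bigr),
\end{equation*}
and note that the last trace is the boundary value of a function holomorphic on a complex neighbourhood of $I$: for $\te_0\in D_\ep^+$ with $\im\te_0>0$, the essential spectrum $\Ga_{q,\te_0}$ is tilted into the lower half-plane, so $\mu\in 2bq+\la+rI$ lies in $\rho(H_{0,\te_0})$ as soon as $0\notin I$ and $r$ is small. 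This trace contribution will ultimately be absorbed into the holomorphic $g$ of the statement. Using (\ref{eq definition de xi2}) one has $\pi\xi_2(\mu)=\im\log D(\mu+i0)$, so only $\im(\log D)'(\mu+i0)$ remains to be analysed.

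Second, I rescale by $z\mapsto 2bq+\la+rz$ and set $D_r(z):=D(2bq+\la+rz)$, holomorphic on a neighbourhood of $\overline{\Om}$. By the definition of resonances and Theorem~\ref{Thm upper bound}, the zeros $z_j(r)\in\Om$ of $D_r$ number at most
\begin{equation*}
N(r) := O\bigl(n_+(r,\nu p_qWp_q)|\ln r|\bigr).
\end{equation*}
Since $\Om$ is simply connected I perform the factorisation
\begin{equation*}
D_r(z)=e^{g_0(z,r)}\prod_{j}(z-z_j(r)),
\end{equation*}
with $g_0(\cdot,r)$ holomorphic in $\Om$, the product running over the zeros in $\Om$.

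Third, the heart of the argument is to prove $|g_0'(z,r)|=O(N(r))$ uniformly on $\Ti\Om$. The bound $|\mbox{det}_2(I+K)|\le e^{C\|K\|_2^2/2}$ gives $\log|D(z)|\le C\|V_\te^{1/2}(H_{0,\te}-z)^{-1}|V_\te|^{1/2}\|_2^2$; after splitting $I_\perp=p_q+(I_\perp-p_q)$, the transverse complement contributes a quantity bounded uniformly on $2bq+\la+r\Om$ (the corresponding resolvent of $H_{0,\te}$ is analytic and uniformly bounded there), while the $p_q$-channel reduces via $\hpa(\te)$ to a one-dimensional Schr\"odinger problem whose relevant trace norms are controlled by the Toeplitz operator $p_qWp_q$, and thus by $N(r)$. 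Combining $\log|D_r|\le CN(r)$ on $\Om$ with a Cartan minimum-modulus estimate on $\prod_j(z-z_j(r))$, the Borel--Carath\'eodory inequality applied to $\re g_0$ yields the claimed bound on $\Ti\Om$. Differentiating the factorisation and taking boundary values as $\im z\to 0^+$ then gives
\begin{equation*}
\frac{D_r'}{D_r}(\mu_0+i0)=g_0'(\mu_0,r)+\sum_j\frac{1}{\mu_0-z_j(r)},\qquad \mu_0=\frac{\mu-2bq-\la}{r}.
\end{equation*}
Taking imaginary parts produces Lorentzian terms $-\im z_j/|\mu_0-z_j|^2$ for $z_j\notin\R$ and, via $\im(\mu_0-z_j-i0)^{-1}=-\pi\,\de(\mu_0-z_j)$, Dirac contributions for real resonances; reintroducing the chain-rule factor $r^{-1}$ and absorbing the holomorphic trace term of Step~1 into $g$ produces the stated formula. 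The bound $O(|\ln r|r^{-2/\de_\perp})$ follows from the last assertion of Theorem~\ref{Thm upper bound}.

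The main obstacle is the estimate $\log|D_r|\le CN(r)$ on $\Om$: a naive Hilbert--Schmidt bound on $V_\te^{1/2}(H_{0,\te}-z)^{-1}|V_\te|^{1/2}$ produces the wrong order in $r$, so one must exploit the Landau decomposition of $\hpe$ to prove that only the $q$-th sector is responsible for the singular behaviour near $2bq+\la$, and to identify $p_qWp_q$ as the correct spectral invariant governing both the count of zeros and the size of the regular factor $g_0$.
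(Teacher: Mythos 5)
Your overall architecture (reduce to $\xi_2$, factorize the determinant over its zeros, control the non-vanishing factor, take boundary values) is the right one, but two of your key intermediate claims are false as stated, and they are precisely the points where the paper has to work.

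First, the bound $\log|D_r(z)|\le CN(r)$ with $N(r)=O(n_+(r,\nu p_qWp_q)|\ln r|)$ cannot hold for the full regularized determinant $D(z)=\det_2(I+{\cal T}_{V,\te}(z))$. Writing $\det_2(I+T)=\det(I+T_{\rm fin})\det_2(I+T_{\rm small})e^{-\tr(T_{\rm fin})}$, the singular part of ${\cal T}_{V,\te}$ near $2bq+\la$ is $-\tau_q/\eta$ with $\tr(\tau_q)=O(1)$, so the scalar factor $e^{-\tr(\cdot)}$ has modulus as large as $e^{C/r}$ on $|\eta|\sim r$; since $\de_\perp>2$, $C/r$ is not $O(|\ln r|\,r^{-2/\de_\perp})$. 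The paper therefore applies the Sj\"ostrand factorization (Proposition \ref{propomegatilde}) not to $d_\te$ but to the ordinary finite-rank determinant $D_\te(z,r)=\det(I+K_\te(z,r))$ obtained after splitting $B_q=B_q{\bf1}_{[0,r\nu^{-1}]}(B_q)+B_q{\bf1}_{]r\nu^{-1},\infty[}(B_q)$; the discarded factors $\det_2(I+\Ti K)$ and $e^{-\tr({\cal T}_{V,\te}-\Ti K)}$ are treated separately and are exactly what produces the $\Ti n_2(r/\nu)$ and $\Ti n_1(r/\nu)$ terms in the statement --- terms your sketch never generates because you claim everything is dominated by $N(r)$. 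Relatedly, your Borel--Carath\'eodory step also needs a lower bound $|D_r|\ge e^{-CN(r)}$ at interior points (this is the second lemma of Section \ref{secmajoration}, and the second hypothesis of Proposition \ref{propomegatilde}); you never invoke it.

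Second, the correction $\frac1\pi\im\tr\bigl(V_\te(H_{0,\te}-\mu)^{-2}\bigr)$ cannot be ``absorbed into $g$''. This trace has a double pole at $2bq+\la$ with coefficient $\tr(V\,p_q\otimes p_\parallel)$, generically nonzero, so any holomorphic antiderivative on $2bq+\la+r\Om$ rescales to a function of size $r^{-1}$ on $\Ti\Om$, incompatible with $g=O(|\ln r|\,r^{-2/\de_\perp})$. In the paper this term survives explicitly in the representation formula for $\xi_2'$ (Theorem \ref{thm representation formula}) and then cancels \emph{exactly}, by cyclicity of the trace, against the $+\frac1\pi\im\tr(\p_zT_{V,\te})$ coming from Lemma \ref{lemma relation xi et xi2}. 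Your two errors are in fact two faces of the same omission: the $1/\eta$ singularity hidden in $e^{-\tr({\cal T}_{V,\te})}$ and the $1/\eta^2$ singularity of the trace correction must be exhibited and cancelled against each other, not estimated. Until you make that cancellation explicit, the claimed bound on $g$ does not follow.
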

Using \cite[Corollary1]{JBVBGR}, for $W$ defined above satisfying
the assumption of Lemma \ref{lenpqUpqdecroissane polonomyale} with
$\al\geq2$, we have 
\begin{equation}
\Ti n_p(r)=O(r^{-\frac2\al}),\ p=1,2.
\end{equation}
 Finally, if
the assumption of Lemma \ref{lenpqUpqdecroissane exponentiel} or
\ref{lenpqUpqsupportcompact} hold for $W=U$, we have
\begin{equation}
\Ti n_p(r)=o(\vphi_\be(r))\ \ r\searrow0,
\end{equation}
the function $\vphi_\be(r) 
$ being defined in Lemma
\ref{lenpqUpqdecroissane exponentiel} or
\ref{lenpqUpqsupportcompact}.
\medskip

As in \cite{VPMZ}, \cite{VBVP} or \cite{JBVBGR} and repeating the
arguments used in the proof of \cite[Corollary 3]{JBVBGR}, we deduce
from Theorem \ref{Thm upper bound}-\ref{thm Breit-Wigner} the
following theorem
\begin{thm}\label{thm trace formula}\emph{\bf[Trace formula]}
Let $\Ti\Om\subset\subset\Om$ be as in Theorem \ref{thm
Breit-Wigner}. Suppose that $f$ is holomorphic on a neighborhood of
$\Om$ and that $\phi\in C_0^\infty(\Om\cap\R)$ satisfies $\phi=1$
near $\Ti\Om\cap\R$. Then, under the assumptions of Theorem \ref{thm
Breit-Wigner}, we have the following trace formula
\begin{equation*}\label{eq trace formula}
\emph{tr}\!\left(\!(\phi f)(\frac{H-2bq-\la}{r})-(\phi
f)(\frac{H_0-2bq-\la}{r})\!\right)\!=\hspace{-5mm}\sum_{w\in\textmd{\emph{Res}}(H)\cap2bq+\la+r\Ti\Om}\hspace{-8mm}f(\frac{w-2bq-\la}{r})+E_{f,\phi}(r)
\end{equation*}
with
$$|E_{f,\phi}(r)|\leq M_{\phi}\sup\{|f(z)|:z\in\Om\setminus \Ti\Om, \emph{Im}z\leq0\}\times N_q(r),$$
where $N_q(r)=n_+(r,\nu p_q Wp_q)|\ln r|+\Ti n_1(r/\nu)+\Ti
n_2(r/\nu)=O(|\ln r|r^{-\frac{2}{\de_\perp}}),$ and $M_{\phi}$
depends only on $\phi$.
\end{thm}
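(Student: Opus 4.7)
The strategy is to insert the Breit--Wigner decomposition of $\xi'$ provided by Theorem~\ref{thm Breit-Wigner} into the SSF identity (\ref{eq def xi'}) and to account for each of the three resulting contributions, exactly as in the proof of \cite[Corollary~3]{JBVBGR}. Set $\mu_0:=2bq+\la$ and $F_r(\mu):=(\phi f)\big((\mu-\mu_0)/r\big)$; since $\phi\in C_0^\infty(\Om\cap\R)$ we have $F_r\in C_0^\infty(\R)$, so by (\ref{eq def xi'})
\[
\tr\!\big(F_r(H)-F_r(H_0)\big)=-\int_\R F_r(\mu)\,\xi'(\mu)\,d\mu.
\]
Substituting the Breit--Wigner decomposition of $\xi'(\mu)$ splits the right-hand side into contributions from (i) the holomorphic function $g(\cdot,r)$, (ii) Lorentzians centred at complex resonances $w\in\mathrm{Res}(H)\cap(\mu_0+r\Om)$, and (iii) Dirac masses at real resonances $w\in\mu_0+rI$.

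Contribution (iii) is immediate: since $\phi\equiv1$ on a neighbourhood of $\Ti\Om\cap\R=I$, it equals $\sum_{w\in\mu_0+rI}f((w-\mu_0)/r)$, the real-resonance part of the main sum. For (ii), I would use the identity $\frac{-\im w}{\pi|\mu-w|^2}=\frac{1}{\pi}\im\frac{1}{\mu-w}$ to rewrite each Lorentzian as a Cauchy transform, then deform the contour into the lower half plane using an almost-analytic extension $\Ti\phi$ of $\phi$. Stokes' formula produces $(\Ti\phi f)((w-\mu_0)/r)$ plus an area integral whose integrand $\bar\partial\Ti\phi\cdot f$ is supported in the lower-half part of $\Om\setminus\Ti\Om$, because $\bar\partial\Ti\phi$ vanishes near $\Ti\Om\cap\R$ and $f$ is already holomorphic on $\Om$. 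For $\zeta:=(w-\mu_0)/r\in\Ti\Om$ this recovers $f(\zeta)$ modulo an error of size $O(\sup_{\Om\setminus\Ti\Om,\,\im z\le0}|f|)$, while for $\zeta\in\Om\setminus\Ti\Om$ with $\im\zeta\le0$ a direct bound gives the same estimate. Contribution (i), after the change of variables $\eta=(\mu-\mu_0)/r$, becomes $-\frac{1}{\pi}\int_\R(\phi f)(\eta)\,\im g'(\eta,r)\,d\eta$, and the same Stokes/almost-analytic-extension argument combined with $\|g(\cdot,r)\|_{L^\infty(\Om)}=O(N_q(r))$ from Theorem~\ref{thm Breit-Wigner} bounds it by $O(\sup_{\Om\setminus\Ti\Om,\,\im z\le0}|f|\cdot N_q(r))$. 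Summing the three contributions and invoking Theorem~\ref{Thm upper bound} to bound $\#(\mathrm{Res}(H)\cap(\mu_0+r\Om))$ by a constant times $n_+(r,\nu p_qWp_q)|\ln r|$ yields the claimed estimate.

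The delicate step is producing the error in the precise form stated, with the supremum of $|f|$ taken only over $\Om\setminus\Ti\Om\cap\{\im z\le0\}$ rather than over all of $\Om$. The restriction to $\Om\setminus\Ti\Om$ is achieved because $\phi\equiv1$ near $\Ti\Om\cap\R$, so contour deformations cross $\Ti\Om$ without generating error; the further restriction to the lower half plane is obtained by descending the contour there, where all resonances lie and where $f$ is controlled on the analytically-continued sheet. Once the error is in this sharp form, combining Theorems~\ref{Thm upper bound} and~\ref{thm Breit-Wigner} immediately gives the desired trace formula.
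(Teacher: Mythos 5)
Your proposal follows essentially the same route as the paper, whose entire proof of this theorem is the citation to \cite[Corollary 3]{JBVBGR}: integrate the Breit--Wigner decomposition of $\xi'$ from Theorem \ref{thm Breit-Wigner} against $(\phi f)((\cdot-2bq-\la)/r)$, recover the main sum from the Dirac masses and the Lorentzians via almost-analytic extension and Stokes' formula pushed into the lower half-plane, and absorb the $g$-term and the resonances in $\Om\setminus\Ti\Om$ (counted by Theorem \ref{Thm upper bound}) into $E_{f,\phi}(r)$. The only point to tidy up is that Theorem \ref{thm Breit-Wigner} gives the decomposition of $\xi'$ only on $2bq+\la+rI$ with $I=\Ti\Om\cap\R$, whereas you integrate over $2bq+\la+r\,\mathrm{supp}\,\phi$ with $\mathrm{supp}\,\phi\subset\Om\cap\R$ possibly larger than $I$; one should first apply that theorem to an intermediate simply connected $\Ti\Om'$ with $\Ti\Om\subset\subset\Ti\Om'\subset\subset\Om$ and $\mathrm{supp}\,\phi\subset\Ti\Om'\cap\R$, exactly as in the cited reference.
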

\section{Definition of resonances via
distortion analyticity}\label{secdistortion} In this section, we
start with the definition of the deformation for the electromagnetic
Schr\"odinger operator by analytic distortion on $\R_{x_3}$.
 We calculate the essential spectrum of the distorted Schr\"odinger operator
$H_\te$. 
 We prove that the discrete eigenvalues of 
 $H_\te$ are independent of the
distortion, this justifies the definition of a resonance as a
discrete eigenvalue of the distorted operator $H_\te$. We will also
prove that the resonances of $H$ repeated with their multiplicity
coincide with the zeros of a regularized determinant $\det_2(I+A)$
defined for a Hilbert-Schmidt operator $A$ by
\begin{eqnarray}\label{eq de definition de det_2}
 \mbox{det}_2(I+A):=\det((I+A)e^{-A}),
\end{eqnarray}
(see Krein \cite{MGK}). Let us now introduce the one-parameter
family of unitary distortions in the magnetic field direction $x_3$:
\begin{eqnarray}\label{definitionofUteta}
U_\te f(x)=J_{\phi_\te(x)}^{\frac12}f(\phi_\te(x)),\ \ \ \te\in\R, \
\ f\in S(\R),
\end{eqnarray}
where $\phi_\te(x)=x+\te g(x)$, $g:\R\longmapsto\R$ is a smooth
function and $J_{\phi_\te(x)}=\det(I+\te g'(x))$ is the Jacobian of
$\phi_\te(x)$. We suppose that $g$ satisfies the assumption
\begin{eqnarray*}
{\bf(A_g)}\left\{\begin{array}{l}
\mbox{(i)}\ \ \mbox{sup}_{x\in\R}|g'(x)|<1,\\
\mbox{(ii)}\ \,g(x)=0,\ \ \mbox{in the compact set }[-R_0,R_0],\ (\mbox{see }(\ref{secteurC0ep}) ),\\
\mbox{(iii)}\ g(x)=x,\ \ \mbox{outside a compact set }K(\supset
[-R_0,R_0]).
\end{array}\right.
\end{eqnarray*}
We recall that $$H_\te:=(I_{\perp}\otimes U_\te)H(I_{\perp}\otimes
U_\te^{-1})=H_{0,\te}+V_\te.$$ From (\ref{eqH0teta}) and using
Kato's theorem \cite[Theorem 4.5.35]{TK} we have the following (see
also Hunziker \cite{WH} for Schr\"odinger operator and \cite[Section
3]{AK} for the Dirac operator).
\begin{prop}\label{propH_te de type A} We suppose that the potential $V$ satisfies all the
assumptions of Section \ref{secassumption}. Then we have

\begin{itemize}
\item[(i)] $\te\in D_{\epsilon}\longmapsto H_{\theta}=H_{0,\theta}+
V_\te$ is an analytic family of type A.
\item [(ii)]$\si_{ess}(H_\te)=\si_{ess}(H_{0,\te})=2b\N+\si(\hpa(\te)).$
\end{itemize}
\end{prop}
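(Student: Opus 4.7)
The plan is to verify (i) and (ii) in sequence, treating analyticity first and the essential-spectrum identity afterwards.

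\medskip

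\noindent\textbf{Claim (i).} I would first write $H_\te$ explicitly as a differential operator on $\R_{x_3}$ with operator-valued coefficients: the conjugation $U_\te(-d^2/dx_3^2)U_\te^{-1}$ produces a second-order operator whose coefficients are rational expressions in $1+\te g'(x_3)$ and involve $g''$. By $(\mathbf{A_g})$(i) combined with $|\te|\leq r_\epsilon<1$, the quantity $|\te g'(x_3)|$ stays uniformly below $1$, so these coefficients are bounded and holomorphic in $\te\in D_\epsilon$. The multiplicative terms $v_0(\phi_\te(x_3))$ and $V_\te(x)=V(X_\perp,\phi_\te(x_3))$ are also holomorphic in $\te$: on $[-R_0,R_0]$ this is trivial since $\phi_\te=\mbox{id}$ by $(\mathbf{A_g})$(ii), and for $|x_3|\geq R_0$ the image $\phi_\te(x_3)$ lies in the analyticity sector $C_{\epsilon,0}$---the precise role of $r_\epsilon=\epsilon/\sqrt{1+\epsilon^2}$ is to guarantee that $(1+\te)x_3\in C_{\epsilon,0}$ for every $|x_3|\geq R_0$ and $\te\in D_\epsilon$ (outside the compact set $K$, where $g(x_3)=x_3$, $\phi_\te$ is literally multiplication by $1+\te$). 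Since the highest-order perturbation of the Laplacian is a small bounded multiple of itself, the operator domain is $\te$-independent and equals $D(H_0)$. The holomorphy of $\te\mapsto H_\te\psi\in L^2$ for each $\psi\in D(H_0)$ then follows, and Kato's theorem \cite[Theorem 4.5.35]{TK} on analytic families of type A gives (i).

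\medskip

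\noindent\textbf{Claim (ii).} Since $\hpe$ has pure point spectrum $2b\N$ with $\te$-independent eigenprojections $p_q$ of infinite rank, the tensor structure (\ref{eqH0teta}) yields the direct-sum decomposition $H_{0,\te}=\bigoplus_{q\in\N}(2bq+\hpa(\te))$ on $\bigoplus_q(\HH_q\otimes L^2(\R_{x_3}))$, and therefore $\si(H_{0,\te})=\si_{ess}(H_{0,\te})=2b\N+\si(\hpa(\te))$. For the identity $\si_{ess}(H_\te)=\si_{ess}(H_{0,\te})$ I plan to show that $V_\te$ is $H_{0,\te}$-compact, i.e.\ that $V_\te(H_{0,\te}-z_0)^{-1}$ is compact for some $z_0$ in the resolvent set. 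The potential $V_\te$ still decays as $\langle X_\perp\rangle^{-\de_\perp}\langle x_3\rangle^{-\de_\parallel}$ since $|\phi_\te(x_3)|\sim|x_3|$ at infinity; decomposing along the Landau levels reduces compactness to Toeplitz-type blocks $p_q V p_q$ in $X_\perp$ (compact for $\de_\perp>2$ by the magnetic-Toeplitz results behind Lemma \ref{lenpqUpqdecroissane polonomyale}) coupled with one-dimensional resolvents of $\hpa(\te)$ in $x_3$ (compactified by $\de_\parallel>1$ through a Rellich argument). A Weyl-Browder stability theorem for non-self-adjoint operators then delivers the conclusion.

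\medskip

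\noindent\textbf{Main obstacle.} The substantive work lies in Claim (ii). Unlike the non-magnetic case, $(H_{0,\te}-z_0)^{-1}$ provides no smoothing in the perpendicular variables, so compactness cannot be derived from an $H^2$-Rellich argument; it must be extracted from the $X_\perp$-decay of $V_\te$ through the Landau projections---the same mechanism powering the upper bound of Theorem \ref{Thm upper bound}. A secondary issue is selecting the right notion of essential spectrum for the non-self-adjoint $H_\te$: the Browder essential spectrum, being stable under relatively compact perturbations, is the natural choice, and the Schr\"odinger analogue in \cite{WH} and the Dirac version in \cite{AK} provide close templates for both the type-A verification and the compactness step.
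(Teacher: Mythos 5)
Your proposal is correct and follows essentially the route the paper itself indicates (the paper gives no written proof, only the citation of Kato's type-A theorem and of Hunziker and the Dirac analogue): holomorphy of the distorted coefficients together with a $\te$-independent domain for (i), and relative compactness of $V_\te$ with respect to $H_{0,\te}$ plus Weyl stability for (ii), the compactness being exactly what the paper's Lemmas \ref{lemresolvent hilbetschmidt}--\ref{lemTVholomorphe z in Om_q} encode (via the Hilbert--Schmidt property of $\langle X_\perp\rangle^{-\de_\perp/2}p_q$ and the norm-convergent sum over Landau levels, rather than Toeplitz compactness per se). The one point worth making explicit is that the Browder essential spectrum is stable under relatively compact perturbation only when each component of the Fredholm domain meets both resolvent sets; this holds here because $\C\setminus\bigcup_{q}\Gamma_{q,\te}$ is connected and contains points of large imaginary part where both $(H_{0,\te}-z)^{-1}$ and $(H_{\te}-z)^{-1}$ exist.
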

\begin{lem}\label{lem spectre ess of H 0 pa teta}
The essential spectrum of $\hpa(\te)$ is
\begin{eqnarray}
\si_{ess}(\hpa(\te))=\Big\{\frac{\mu}{(1+\te)^2}\in\C;\ \ \
\mu\in[0,+\infty[\Big\}.
\end{eqnarray}
The rest of the spectrum is
$$ \si_{disc}(\hpa)\cup\{z_1,z_2,\dots\},$$
where $\si_{disc}(\hpa)$ denotes the discrete spectrum of $\hpa$ and
$z_1,z_2,\dots$ are the complex eigenvalues of $\hpa(\te)$.
\end{lem}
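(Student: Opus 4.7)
The plan is to reduce the computation of $\si_{ess}(\hpa(\te))$ to that of an explicitly dilated free operator via a Weyl-type compactness argument, and then to extract the discrete part from the analytic perturbation theory already invoked in Proposition \ref{propH_te de type A}.

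First I would make the action of $\hpa(\te)$ explicit. Starting from $U_\te f(x)=J_{\phi_\te(x)}^{1/2} f(\phi_\te(x))$ with $\phi_\te(x)=x+\te g(x)$, a direct change of variables for real $\te$ gives $\hpa(\te)$ as a second-order differential operator whose coefficients are built from $\te$, $g'$, $g''$ and the composition $v_0\circ\phi_\te$. By assumption $(A_g)$(iii), $g(x)=x$ outside a compact set $K$, so outside $K$ one obtains
\begin{equation*}
\hpa(\te) = -(1+\te)^{-2}\frac{d^2}{dx_3^2} + v_0((1+\te)x_3).
\end{equation*}
The restriction $|\te|\leq r_\ep=\ep/\sqrt{1+\ep^2}$ is exactly what is needed to keep $(1+\te)x_3\in C_{\ep,0}$ for $|x_3|$ large, so by (\ref{eq decroissance v0}) the shifted potential decays like $\langle x_3\rangle^{-\de_0}$. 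All the formulas extend analytically to $\te\in D_\ep$.

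Second, I would introduce $T_\te := -(1+\te)^{-2}d^2/dx_3^2$ on $L^2(\R)$, whose spectrum is purely essential and equals the ray $\{\mu/(1+\te)^2:\mu\geq 0\}$ (either by Fourier transform, or by viewing $T_\te$ as the image of $-d^2/dx_3^2$ under the dilation $x\mapsto(1+\te)x$). The difference $\hpa(\te)-T_\te$ splits as a compactly supported second-order differential perturbation (localized in $K$, coming from the terms where $g'\neq 1$) plus the decaying multiplication operator $v_0\circ\phi_\te$. A standard Hilbert-Schmidt kernel estimate then shows that $(\hpa(\te)-T_\te)(T_\te-z)^{-1}$ is compact for any $z$ outside $\si(T_\te)$, so that the resolvents of $\hpa(\te)$ and $T_\te$ differ by a compact operator. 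By the invariance of the essential spectrum under compact resolvent perturbations one deduces
\begin{equation*}
\si_{ess}(\hpa(\te))=\si_{ess}(T_\te)=\Big\{\frac{\mu}{(1+\te)^2}:\mu\in[0,+\infty)\Big\},
\end{equation*}
which is the first claim. The complement $\si(\hpa(\te))\setminus\si_{ess}(\hpa(\te))$ then consists only of isolated eigenvalues of finite algebraic multiplicity. By Proposition \ref{propH_te de type A}(i), $\te\mapsto\hpa(\te)$ is an analytic family of type A on $D_\ep$, so by a standard Aguilar-Combes type argument each real eigenvalue of $\hpa=\hpa(0)$ persists as an eigenvalue of $\hpa(\te)$ on any connected neighborhood of $0$ on which it stays separated from the rotated ray; labelling the additional non-real discrete eigenvalues $z_1,z_2,\ldots$ gives the decomposition $\si_{disc}(\hpa)\cup\{z_1,z_2,\ldots\}$.

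The main obstacle is the compactness step. In the non-self-adjoint setting one must control the singular values of $(\hpa(\te)-T_\te)(T_\te-z)^{-1}$ uniformly in $\te\in D_\ep$, exploiting the holomorphic decay of $v_0$ on $C_{\ep,0}$ together with the compact support of $g'-1$ and of $g''$; the precise choice of $r_\ep$ is what guarantees that $(1+\te)x_3$ remains inside the sector of analyticity of $v_0$ for $|x_3|$ large. Once this relative compactness is established, the remainder of the proof is a routine application of Weyl's theorem and Kato-Rellich perturbation theory.
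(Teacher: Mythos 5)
The paper states this lemma without proof, treating it as the standard computation from distortion analyticity (cf.\ Hunziker \cite{WH} and \cite{MAPBVBCFGR}), and your overall strategy --- compute $\hpa(\te)$ explicitly using $(A_g)$(iii), compare with the model operator $T_\te=-(1+\te)^{-2}d^2/dx_3^2$ whose spectrum is the ray $(1+\te)^{-2}[0,+\infty)$, invoke a Weyl-type stability result, and then identify the discrete part by the Aguilar--Combes continuation argument --- is exactly the intended one. Your check that $|\te|\le r_\ep$ forces $|\arg(1+\te)|\le\arctan\ep$, so that $\phi_\te(x_3)=(1+\te)x_3$ stays in $C_{\ep,0}$ for $|x_3|$ large, is also the right justification for the decay of $v_0\circ\phi_\te$.

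There is, however, one step that fails as written: the claim that $\bigl(\hpa(\te)-T_\te\bigr)(T_\te-z)^{-1}$ is compact. The difference $W:=\hpa(\te)-T_\te$ contains a \emph{second-order} term $a(x_3)\,d^2/dx_3^2$ with $a=(1+\te g')^{-2}-(1+\te)^{-2}$ compactly supported but not identically zero, since the distortion changes the principal symbol on $K$. The symbol of $\frac{d^2}{dx_3^2}(T_\te-z)^{-1}$ tends to the nonzero constant $(1+\te)^2$ as $|\xi|\to\infty$, so $a(x_3)\frac{d^2}{dx_3^2}(T_\te-z)^{-1}=(1+\te)^2a(x_3)+(\text{compact})$, and multiplication by a nonzero compactly supported function is not compact on $L^2(\R)$. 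No Hilbert--Schmidt kernel estimate can rescue this. The standard repair is to prove instead that the \emph{resolvent difference} is compact: write the second-order part of $W$ in divergence form, use the second resolvent identity
$(\hpa(\te)-z)^{-1}-(T_\te-z)^{-1}=-(T_\te-z)^{-1}W(\hpa(\te)-z)^{-1}$,
and sandwich each compactly supported coefficient between $(T_\te-z)^{-1}\p_{x_3}$ (bounded) and $\p_{x_3}(\hpa(\te)-z)^{-1}$ (bounded into $H^1$), so that Rellich's theorem gives compactness; the decaying potential term is handled as you propose. Invariance of the Fredholm essential spectrum under compact resolvent difference, together with the connectedness of $\C\setminus(1+\te)^{-2}[0,+\infty)$ and the nonemptiness of the resolvent set there, then yields both claims of the lemma. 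With this correction your argument is complete.
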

\begin{rem}
 The part $ \si_{disc}(\hpa)\cup\{z_1,z_2,\dots\}$ of the
 spectrum of $\hpa(\te)$ correspond to eigenvalues  with infinite multiplicity of $I_\perp\otimes \hpa(\te)$.
\end{rem}
\medskip

In the following we fix $q\in\N$ and a compact set $\Om_q$ centered
at
$2bq+\la$ such that \begin{equation}\label{definition of Omq}
\Om_q \cap \si_{ess}(H_\te)=\{2bq+\la\}.
\end{equation}

Repeating arguments in the proof of \cite[Proposition 1]{JBVBGR},
\cite[Lemma 1]{JBVBGR} and using the resolvent equation
$$(H_0-z)^{-1}=(H_0-v_0-z)^{-1}\left(I-v_0(H_0-z)^{-1}\right),$$
we obtain the following lemma.
\begin{lem}\label{lemresolvent hilbetschmidt}
 The operators $V(H_0-z)^{-1}$ and $\p_z(V(H_0-z)^{-1})$ are holomorphic  on $\{z\in\C;\ \emph{Im}(z)>0\}$ with values in the Hilbert-Schmidt class $S_2$ and in the trace class $S_1$ respectively.
\end{lem}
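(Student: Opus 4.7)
The plan is to follow \cite[Proposition~1, Lemma~1]{JBVBGR} and reduce the problem to the "free" operator $\tilde H_0 := \hpe \otimes I_\parallel + I_\perp \otimes (-d^2/dx_3^2)$ (the operator $H_0$ with $v_0 \equiv 0$), which admits the explicit spectral decomposition
\begin{equation*}
(\tilde H_0-z)^{-1} = \sum_{q \in \N} p_q \otimes R_{\parallel,q}(z), \qquad R_{\parallel,q}(z) := \bigl(-d^2/dx_3^2 + 2bq - z\bigr)^{-1},
\end{equation*}
where $p_q$ is the Landau projection onto $\HH_q = \ker(\hpe - 2bq)$. The reduction is provided by the second resolvent identity $(H_0-z)^{-1} = (\tilde H_0-z)^{-1}\bigl(I - v_0(H_0-z)^{-1}\bigr)$, in which the factor $I - v_0(H_0-z)^{-1}$ is bounded and holomorphic on $\{\im z > 0\}$ since $v_0 \in L^\infty(\R)$. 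It therefore suffices to establish the claimed Schatten-class regularity for $V(\tilde H_0-z)^{-1}$ and $V(\tilde H_0-z)^{-2}$.

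\textbf{Tensor decomposition and the key Schatten bound.} Because the summands in the series above act on the mutually orthogonal subspaces $\HH_q \otimes L^2(\R_{x_3})$, Hilbert--Schmidt norms add by Pythagoras, and likewise for the differentiated series $\sum_q p_q \otimes R_{\parallel,q}(z)^2$. To bound the summands I would use the pointwise majorization $|V(x)| \leq V_\perp(X_\perp)\, V_\parallel(x_3)$ with $V_\perp \sim \langle X_\perp\rangle^{-\de_\perp}$, $V_\parallel \sim \langle x_3\rangle^{-\de_\parallel}$: since $V^*V \leq V_\perp^2 V_\parallel^2$ as multiplication operators, conjugation by any bounded $T$ together with operator monotonicity of the square root gives $|VT| \leq |V_\perp V_\parallel T|$, whence
\begin{equation*}
\|VT\|_{S_p} \leq \|V_\perp V_\parallel T\|_{S_p} \quad \text{for every } 1 \leq p \leq \infty.
\end{equation*}
Applied to $T = p_q \otimes R_{\parallel,q}^k$ ($k = 1, 2$), the right-hand side factorizes across the tensor product into $\|V_\perp p_q\|_{S_p} \cdot \|V_\parallel R_{\parallel,q}^k\|_{S_p}$, reducing the full 3D problem to independent 2D and 1D Schatten estimates.

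\textbf{Concrete estimates and main obstacle.} For Hilbert--Schmidt the Mehler diagonal $p_q(X_\perp, X_\perp) = b/(2\pi)$ gives $\|V_\perp p_q\|_{S_2}^2 = (b/2\pi)\|V_\perp\|_{L^2(\R^2)}^2$, finite by $\de_\perp > 2$, while the explicit 1D Green's function bound $|R_{\parallel,q}(x,y;z)| \leq C|2bq-z|^{-1/2} e^{-c\sqrt{2bq+1}\,|x-y|}$ yields $\|V_\parallel R_{\parallel,q}(z)\|_{S_2}^2 \leq C'\|V_\parallel\|_{L^2(\R)}^2/|2bq-z|^{3/2}$, with $\|V_\parallel\|_{L^2}$ finite by $\de_\parallel > 1$ and the series convergent in $q$. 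For trace class, $\|V_\perp p_q\|_{S_1} = (b/2\pi)\|V_\perp\|_{L^1(\R^2)}$ is finite by $\de_\perp > 2$, and $\|V_\parallel R_{\parallel,q}^2\|_{S_1} \leq C\|V_\parallel\|_{L^1(\R)}/|2bq-z|^{3/2}$ follows either from a Kato--Seiler--Simon-type estimate or by direct analysis of the explicit kernel of $R_{\parallel,q}^2$; again the $q$-sum converges. Holomorphy of the maps into $S_2$ and $S_1$ then follows from uniform convergence on compact subsets of $\{\im z > 0\}$. The most delicate step is the trace-class bound for $V_\parallel R_{\parallel,q}^2$ in one dimension, since Kato--Seiler--Simon does not extend to $p = 1$ for arbitrary symbols; one must exploit the explicit kernel structure of $R_{\parallel,q}^2$ or rearrange $V_\parallel R_{\parallel,q}^2$ as a product of two Hilbert--Schmidt operators, as is done in \cite{JBVBGR}.
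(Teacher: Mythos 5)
Your strategy is the same as the paper's: the paper proves this lemma in one line by invoking the resolvent identity $(H_0-z)^{-1}=(H_0-v_0-z)^{-1}(I-v_0(H_0-z)^{-1})$ to reduce to the case $v_0=0$ and then "repeating the arguments of \cite[Proposition 1, Lemma 1]{JBVBGR}", which are exactly the tensor decomposition over Landau levels and the factorized Schatten estimates you write out. One local error worth fixing: the identity $\|V_\perp p_q\|_{S_1}=(b/2\pi)\|V_\perp\|_{L^1}$ confuses the trace norm with the trace. The diagonal-of-the-kernel computation gives $\mbox{tr}(p_qV_\perp p_q)=(b/2\pi)\|V_\perp\|_{L^1}$, i.e.\ the $S_2$-norm squared of $V_\perp^{1/2}p_q$, not the $S_1$-norm of $V_\perp p_q$; since $\|V_\perp p_q\|_{S_1}=\sum_j\la_j(p_qV_\perp^2p_q)^{1/2}$ and $p_q\notin S_2$, membership in $S_1$ does not follow from $V_\perp\in L^1$ alone. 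The correct route is the one the hypothesis $\de_\perp>2$ is designed for: by the Toeplitz eigenvalue estimates (Lemma \ref{lenpqUpqdecroissane polonomyale}, i.e.\ \cite[Theorem 2.6]{GR}) one has $\la_j(p_qV_\perp^2 p_q)=O(j^{-\de_\perp})$, so $\sum_j\la_j^{1/2}<\infty$ precisely when $\de_\perp>2$. With that repair (and the analogous care you already flag for $V_\parallel R_{\parallel,q}^2$ in $S_1$, where one splits the square of the resolvent into two weighted Hilbert--Schmidt factors), your argument is the intended one.
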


\begin{lem}\label{lemTVholomorphe z in Om_q}
The operators $V_\te(H_{0,\te}-z)^{-1}$ and
$\p_z(V_\te(H_{0,\te}-z)^{-1})$ are holomorphic for $z\in
\Om_q\backslash\{2bq+\la\}$ with values in the Hilbert-Schmidt class
$S_2$ and in the trace class $S_1$ respectively.
\end{lem}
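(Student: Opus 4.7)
The plan is to reduce the lemma to Lemma~\ref{lemresolvent hilbetschmidt} via the distortion $I_\perp\otimes U_\te$, and then to extend the variable $z$ from the open upper half-plane onto $\Om_q\setminus\{2bq+\la\}$ by the second resolvent identity, exploiting the fact that the essential spectrum of $H_{0,\te}$ has been rotated away from this region.

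For $\te\in D_\epsilon\cap\R$ the operator $I_\perp\otimes U_\te$ is unitary, so conjugation preserves both $S_2$ and $S_1$; applying this conjugation to Lemma~\ref{lemresolvent hilbetschmidt} immediately gives $V_\te(H_{0,\te}-z)^{-1}\in S_2$ and $\p_z V_\te(H_{0,\te}-z)^{-1}\in S_1$, each holomorphic in $z$ on $\{\im z>0\}$. Using the analytic family of type A from Proposition~\ref{propH_te de type A}, these Schatten-class memberships and the $z$-holomorphy extend to all $\te\in D_\epsilon^+$ provided $z$ stays in the upper half-plane.

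Now fix $\te_0\in D_\epsilon^+$. By (\ref{definition of Omq}), and after possibly shrinking $\Om_q$ so as to avoid the remaining discrete eigenvalues $2bq'+z_j$ of $H_{0,\te_0}$, we have $\Om_q\setminus\{2bq+\la\}\subset\rho(H_{0,\te_0})$, so $(H_{0,\te_0}-z)^{-1}$ is bounded and holomorphic in $z$ on this set. Picking any $z_0$ with $\im z_0>0$, the second resolvent identity gives
\begin{equation*}
V_{\te_0}(H_{0,\te_0}-z)^{-1}=V_{\te_0}(H_{0,\te_0}-z_0)^{-1}\bigl[I+(z-z_0)(H_{0,\te_0}-z)^{-1}\bigr],
\end{equation*}
whose first factor is Hilbert--Schmidt by the previous paragraph and whose bracket is a bounded holomorphic operator-valued function of $z$; the product is therefore $S_2$-valued and holomorphic on $\Om_q\setminus\{2bq+\la\}$.

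For the derivative, factor $V=v_1 v_2$ with $v_1(x)=\langle X_\perp\rangle^{-\de_\perp/2}\langle x_3\rangle^{-\de_\parallel/2}$ and $v_2=V/v_1$. Both $v_i$ are bounded, each satisfies $|v_i|=O(\langle X_\perp\rangle^{-\de_\perp/2}\langle x_3\rangle^{-\de_\parallel/2})$, and both admit holomorphic extensions to $C_{\epsilon,0}$ through the principal branch of $\langle z\rangle^{-\de_\parallel/2}$. Repeating the preceding argument with $v_i$ in place of $V$ yields $v_{i,\te_0}(H_{0,\te_0}-z)^{-1}\in S_2$, whence
\begin{equation*}
\p_z V_{\te_0}(H_{0,\te_0}-z)^{-1}=V_{\te_0}(H_{0,\te_0}-z)^{-2}=\bigl[v_{1,\te_0}(H_{0,\te_0}-z)^{-1}\bigr]\bigl[v_{2,\te_0}(H_{0,\te_0}-z)^{-1}\bigr]\in S_1,
\end{equation*}
holomorphic in $z$ on $\Om_q\setminus\{2bq+\la\}$. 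The main obstacle is the transition from real to complex $\te$ within the Hilbert--Schmidt class: one must produce a bound on $\|V_\te(H_{0,\te}-z_0)^{-1}\|_{S_2}$ uniform on compact subsets of $D_\epsilon^+$. This ultimately rests on the decay $\de_\perp>2$ in the perpendicular directions together with explicit control of the kernel of $(\hpa(\te)-\mu)^{-1}$ along the rotated $x_3$-contour, which is the standard workhorse for checking Schatten-class stability under analytic distortion.
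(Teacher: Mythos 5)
Your overall strategy (reduce to Lemma \ref{lemresolvent hilbetschmidt}, then move $z$ out of the upper half-plane by a resolvent identity, which works because $\Om_q\setminus\{2bq+\la\}$ lies in the resolvent set of $H_{0,\te}$) is the right one, but the decisive step is missing. Everything in your argument hinges on $V_{\te_0}(H_{0,\te_0}-z_0)^{-1}\in S_2$ for \emph{complex} $\te_0$, and you do not prove it: invoking the type-A analyticity of Proposition \ref{propH_te de type A} cannot deliver it, since type-A analyticity concerns the bounded-operator norm and says nothing about Schatten-class membership, and you yourself concede at the end that this is ``the main obstacle'', deferring to unspecified kernel estimates for $(\hpa(\te)-\mu)^{-1}$ on the rotated contour. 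The paper closes exactly this hole by anchoring at the \emph{undistorted} resolvent rather than at $(H_{0,\te_0}-z_0)^{-1}$: it writes
\begin{equation*}
V_\te(H_{0,\te}-z)^{-1}=V_\te(H_0-i)^{-1}\left(I+(H_0-H_{0,\te}+z-i)(H_{0,\te}-z)^{-1}\right),
\end{equation*}
so that the Schatten factor is $V_\te(H_0-i)^{-1}$. Since $\phi_\te$ maps $\R$ into $[-R_0,R_0]\cup C_{\epsilon,0}$, the distorted potential $V_\te$ is again a bounded multiplication operator satisfying the decay (\ref{eqdecroissance de V}), so Lemma \ref{lemresolvent hilbetschmidt} applies to it verbatim; the remaining factor is bounded and holomorphic on $\Om_q\setminus\{2bq+\la\}$ because $H_0-H_{0,\te}=I_\perp\otimes(\hpa-\hpa(\te))$ is relatively bounded with respect to $H_{0,\te}$. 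No estimate on the distorted one-dimensional resolvent is needed. Either adopt this anchoring or actually supply the uniform $S_2$ bound you postpone.

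There is also a concrete error in your treatment of the derivative: the identity $V_{\te_0}(H_{0,\te_0}-z)^{-2}=\bigl[v_{1,\te_0}(H_{0,\te_0}-z)^{-1}\bigr]\bigl[v_{2,\te_0}(H_{0,\te_0}-z)^{-1}\bigr]$ is false, because $v_{2,\te_0}$ does not commute with the resolvent; the right-hand side is indeed trace class (product of two Hilbert--Schmidt operators), but it is not equal to $\p_z\bigl(V_{\te_0}(H_{0,\te_0}-z)^{-1}\bigr)$. The trace-class property of the derivative should, like the $S_2$ property, be pulled back to $\te=0$ by differentiating the resolvent identity above in $z$ and using the second assertion of Lemma \ref{lemresolvent hilbetschmidt}, rather than by a direct factorization of $V$.
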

\begin{proof}
 According 
  to (i) of Proposition \ref{propH_te de type A} and to the definition of $\Om_q$, the function
$z\longmapsto V_\te(H_{0,\te}-z)^{-1}$  is analytic for 
$z\in\Om_q\backslash\{2bq+\la\}$.
 Moreover, from the resolvent equation, for
 $z\in\Om_q\backslash\{2bq+\la\}$, we have
\begin{eqnarray}\label{eq resolv H_{0,te}}
V_\te(H_{0,\te}-z)^{-1}=V_\te(H_0-i)^{-1}\left(1+(H_0-H_{0,\te}+z-i)(H_{0,\te}-z)^{-1}\right).
\end{eqnarray}
If we denote by $p_q$ the orthogonal projection onto
$\cal{H}_q:=\mbox{ker}(H_{0,\perp}-2bq)$
%
 we have
\begin{eqnarray}
(H_{0,\te}-z)^{-1}=\sum_{q\in\N}p_q\otimes(H_{0,\parallel}(\te)+2bq-z)^{-1},\
\ z\in\Om_q\backslash\{2bq+\la\}.
\end{eqnarray}
Since
$$H_0-H_{0,\te}=I_\perp\otimes(H_{0,\parallel}-H_{0,\parallel}(\te))$$
the operator $(H_0-H_{0,\te})(H_{0,\te}-z)^{-1}$ is bounded.
Consequently, according to Lemma \ref{lemresolvent hilbetschmidt}
and equation (\ref{eq resolv H_{0,te}}), we obtain the lemma.
\end{proof}
Let us now introduce the function \begin{eqnarray}
z\in\Om_q\setminus\{2bq+\la\}\To
d_\te(z)=\mbox{det}_2(I+T_{V,\te}(z)),
\end{eqnarray}
with  $T_{V,\te}(z)=V_\te(H_{0,\te}-z)^{-1}$. The determinant
$d_\te(z)$ is well defined according to Lemma \ref{lemTVholomorphe z
in Om_q}
\begin{prop}\label{propresonance=zero}
Let $V$ and $v_0$ as in Section \ref{secassumption}. 
 The resonances of $H$ in $\Om_q$ are the zeros of the regularized determinant
$d_\te(z)=\emph{det}_2(I+T_{V,\te}(z))$ in
$\Om_q\backslash\{2bq+\la\}$,
 and are independent on
$\te\in D_\epsilon$ such that $\Om_q \cap
\si_{ess}(H_\te)=\{2bq+\la\}$.\medskip

 If $z_0$ is a resonance, there exists a
holomorphic function $f(z)$, for $z$ close to $z_0$, such that
$f(z_0)\neq0$ and
$$\emph{det}_2(I+T_{V,\te}(z))=(z-z_0)^{l(z_0)}f(z),$$
with $0<l(z_0)=\emph{mult}(z_0)$ where \emph{mult}$(z_0)$ is the multiplicity of the resonance defined by (\ref{def mult res}).
\end{prop}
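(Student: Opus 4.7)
The argument splits into three independent assertions, each handled by a standard reduction adapted to the Hilbert--Schmidt/trace-class setting of Lemma \ref{lemTVholomorphe z in Om_q}.

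\medskip

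\emph{Step 1: zeros of $d_\te$ coincide with the resonances.} The starting point is the factorization
\begin{equation*}
H_\te - z = (I + T_{V,\te}(z))(H_{0,\te} - z),
\end{equation*}
which is well defined on $\Om_q\setminus\{2bq+\la\}$: indeed $\si(H_{0,\te})$ is purely essential and $\Om_q\cap\si_{ess}(H_\te)=\{2bq+\la\}$, so $(H_{0,\te}-z)$ is boundedly invertible there. Hence $z\in\Om_q\setminus\{2bq+\la\}$ is an eigenvalue of $H_\te$ if and only if $I+T_{V,\te}(z)$ is not invertible, equivalently $d_\te(z)=0$. Since $d_\te$ is holomorphic on $\Om_q\setminus\{2bq+\la\}$ and $d_\te(z)\to 1$ as $|\im z|\to\infty$ (because $\|T_{V,\te}(z)\|\to 0$), the analytic Fredholm theorem guarantees that the zero set is discrete and $d_\te\not\equiv 0$.

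\medskip

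\emph{Step 2: independence of $\te$.} I would view $d_\te(z)$ as a jointly holomorphic function of $(\te,z)$, using Proposition \ref{propH_te de type A}(i) together with Lemma \ref{lemTVholomorphe z in Om_q}. Fix $z$ in the upper half-plane with $\im z$ so large that $z\notin\si(H_\te)$ for every $\te\in D_\ep$. For real $\te$, $I_\perp\otimes U_\te$ is unitary and conjugates $T_{V,0}(z)$ to $T_{V,\te}(z)$, so the unitary invariance of $\det_2$ yields $d_\te(z)=d_0(z)$ on $D_\ep\cap\R$. Analytic continuation in $\te$ then propagates this equality to all of $D_\ep$; a second analytic continuation in $z$ extends it to every $z\in\Om_q\setminus\{2bq+\la\}$ where both sides are defined. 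Thus the location and order of each zero of $d_\te$ do not depend on the admissible distortion parameter.

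\medskip

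\emph{Step 3: the multiplicity equals $l(z_0)$.} Let $\Ga_0$ be a small positively oriented circle around $z_0$, avoiding $2bq+\la$. The plan is to compute both $l(z_0)$ and $\mbox{mult}(z_0)$ as the same contour integral. By the argument principle applied to $d_\te$ and the logarithmic-derivative formula for $\det_2$,
\begin{equation*}
l(z_0)=\frac{1}{2i\pi}\oint_{\Ga_0}\frac{d_\te'(z)}{d_\te(z)}\,dz
=\frac{1}{2i\pi}\oint_{\Ga_0}\tr\!\left[(I+T_{V,\te}(z))^{-1}T_{V,\te}'(z)\right]dz,
\end{equation*}
the $\tr T_{V,\te}'(z)$ term integrating to zero since $T_{V,\te}'(z)=V_\te(H_{0,\te}-z)^{-2}$ is trace class and holomorphic inside $\Ga_0$ (Lemma \ref{lemTVholomorphe z in Om_q}). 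Conversely, substituting $(H_\te-z)^{-1}=(H_{0,\te}-z)^{-1}(I+T_{V,\te}(z))^{-1}$ into $\mbox{mult}(z_0)=\tr P_\te(z_0)$, writing $(I+T_{V,\te}(z))^{-1}=I-(I+T_{V,\te}(z))^{-1}T_{V,\te}(z)$ so as to extract a term $\oint_{\Ga_0}(H_{0,\te}-z)^{-1}\,dz=0$, and cycling the trace produces the same integral. Therefore $l(z_0)=\mbox{mult}(z_0)$, and the factorization $d_\te(z)=(z-z_0)^{l(z_0)}f(z)$ with $f(z_0)\neq 0$ follows from the holomorphy of $d_\te$ at $z_0$.

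\medskip

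The main technical obstacle lies in Step 3, where the trace-cyclicity manipulations must be justified: each operator appearing under $\tr$ has to be shown to be trace class (not merely Hilbert--Schmidt), which requires carefully pairing each Hilbert--Schmidt factor from $T_{V,\te}$ with another Hilbert--Schmidt factor via Lemma \ref{lemTVholomorphe z in Om_q}. A subsidiary care point in Step 2 is the connectedness of the admissible region of $\te$-parameters along which the analytic continuation is carried out, so that it genuinely links any two distortions for which $\Om_q\cap\si_{ess}(H_\te)=\{2bq+\la\}$.
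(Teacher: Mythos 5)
Your proof is correct and follows essentially the same route as the paper: the factorization $H_\te-z=(I+T_{V,\te}(z))(H_{0,\te}-z)$ for Step 1, unitary equivalence for real $\te$ (equivalently $\det_2(I+AB)=\det_2(I+BA)$) plus analytic continuation in $\te$ for Step 2, and the contour-integral identification of the order of the zero with the rank of the Riesz projector, with the $\tr\,\p_z T_{V,\te}$ term vanishing by Lemma \ref{lemTVholomorphe z in Om_q}, for Step 3. The technical care points you flag (trace-class justification in Step 3, connectedness in Step 2) are exactly the ones the paper relies on.
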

\begin{proof}
Since the operator
$H_{0,\te}$ has no 
spectrum in $\Om_q\backslash\{2bq+\la\}$,
 we have
\begin{eqnarray}\label{eq resolv H_te}
 H_\te-z=\left(I+V_\te(H_{0,\te}-z)^{-1}\right)(H_{0,\te}-z).
\end{eqnarray}
Then, if $z\in\Om_q\backslash\{2bq+\la\}$ is a resonance of $H$
which is by definition a discrete eigenvalue of $H_\te$, the
determinant
$d_\te(z)=\mbox{det}_2(I+V_\te(H_{0,\te}-z)^{-1})=\mbox{det}_2(I+T_{V,\te}(z))$
vanishes. 
\medskip

Let us recall that, if $A$ is a bounded operator and B is a trace
class operator on some separable Hilbert space, we have
$\det(I+AB)=\det(I+BA)$. Moreover, for $A$ bounded and $B$
Hilbert-Schmidt, we have
\begin{eqnarray}\label{eqdet(1+AB)=det(1+BA)}
\mbox{det}_2(I+AB)=\mbox{det}_2(I+BA).
\end{eqnarray}
Then, the function $d_\te(z)=\det_2(I+T_{V,\te}(z))$ coincide with
$\det_2(I+T_{V,0}(z))=\mbox{det}_2(I+V(H_{0}-z)^{-1})$ for
$\te\in\R,\ \im z>0$ and by uniqueness of the extension, it is
independent on $\te\in D_\epsilon$. Since the resonances of $H$ are
the zero of $d_\te(z)$ the resonances are independents on $\te\in D_\epsilon$. 
\medskip

In a neighborhood of a zero $z_0$ of $d_\te(z)$ with multiplicity
$l(z_0)$, we write $d_\te(z)=(z-z_0)^{l(z_0)}G(z),$ where $G(z)$ is
a holomorphic function in a neighborhood of $z_0$ with
$G(z_0)\neq0$. Then, by the definition of $l_0(z)$,
$$l_0(z)=\frac{1}{2i\pi}\int_\Ga\p_z\ln\mbox{det}_2\left(1+T_{V,\te}(z)\right)dz,$$
where $\Ga$ is a small positively oriented circle centered at $z_0$. Further, we
have
\begin{eqnarray*}
 \p_z\ln\det\left(1+T(z)\right)=\tr\left((1+T(z))^{-1}\p_z T(z)\right),\ \ z\in\Om_q
\end{eqnarray*}
for any operator-valued holomorphic function $T(z)$ in the trace class $S_1$. Therefore,
\begin{eqnarray*}
\p_z\ln\mbox{det}_2\left(1+T_{V,\te}(z)\right)=\tr\left((1+T_{V,\te}(z))^{-1}\p_zT_{V,\te}(z)\right)-\tr\left(\p_zT_{V,\te}(z)\right).
\end{eqnarray*}
According to Lemma \ref{lemTVholomorphe z in Om_q},
$\p_zT_{V,\te}(z)$ is holomorphic in the trace class, then its
integral on $\Ga$ vanishes and (\ref{eq resolv H_te}) yields
\begin{equation*}
\begin{aligned}
l_0(z)&=\frac{1}{2i\pi}\int_\Ga\tr\left((H_\te-z)^{-1}V_\te(H_{0,\te}-z)^{-1}\right)dz\\
&=-\frac{1}{2i\pi}\int_\Ga\tr\left((H_\te-z)^{-1}-(H_{0,\te}-z)^{-1}\right)dz\\
&= \mbox{rank}\frac{1}{2i\pi}\int_\Ga\left((z-H_\te)^{-1}-(z-H_{0,\te})^{-1}\right)dz\\
&=\mbox{rank}\frac{1}{2i\pi}\int_\Ga(z-H_\te)^{-1}dz.
\end{aligned}
\end{equation*}
In the two latter equalities, we have used that the trace of the
projector coincide with its rank and the integral of
$(H_{0,\te}-z)^{-1}$ on $\Ga$ vanishes since it is holomorphic in
$\Om_q$.
\end{proof}

\section{Upper bound for the number of resonances near $2bq+\la$}\label{secmajoration}
In this section, we establish an upper bound on the number of
resonances in a ring of $\Om_q$ centered at $2bq+\la$ (see (\ref{definition of Omq})). 
 For $z\in\Om_q$, we write $z=2bq+\la+\eta$ where $\eta$ is a
complex number in a domain
centered at $0$ and  $0<r<|\eta|$
. Let 
$W=\sup_{x_3\in C_{\epsilon,0}}|\langle x_3\rangle^{\de_\parallel}
V|$. There exists a bounded function $M(x)$ such that
$$V(x)=W(X_\perp)\langle x_3\rangle^{-2\de_3}M(x)
, \;\mbox{ for }\;\de_3=\de_\parallel/2.$$

According to the previous section, the resonances in $\Om_q$ can be
identified with the points $z\in\Om_q$ where the determinant ${
d}_\te(z)=\det_2(I+T_{V,\te}(z))$ vanishes.
 Using (\ref{eqdet(1+AB)=det(1+BA)}), we have
 $$
d_\te(z)
=\mbox{det}_2(I+{\cal
 T}_{V,\te}(z))$$
with
\begin{equation}\label{eq cal TVteta}{\cal T}_{V,\te}(z)=
W^{\frac12}M_\te \langle
x_3\rangle_\te^{-\de_3}(H_{0,\te}-z)^{-1}W^{\frac12}\langle
x_3\rangle_\te^{-\de_3}
\end{equation}
 where
$\ M_\te=M(X_\perp,\phi_\te(x_3))$ 
and $\langle x_3\rangle_\te:=U_\te\langle x_3\rangle U_\te^{-1}=(1+(\phi_\te(x_3))^2)^\frac12$.
\bigskip

Using spectral theorem, for $\im z>0$, we can write
\begin{eqnarray}
(H_{0,\te}-z)^{-1}=\sum_{j\in\N}
p_j\otimes(H_{0,\parallel}(\te)-z+2bj)^{-1}.
\end{eqnarray}
In order to study the resonances near $2bq+\la$, we split ${\cal
T}_{V,\te}(z)$ into two parts:
\begin{eqnarray*}
 {\cal T}_{V,\te}(z)={\cal T}_{J,\te}^-+{\cal T}_{J,\te}^+,
\end{eqnarray*}
where ${\cal T}_{J,\te}^-(z)=\sum_{j\leq J}{\cal T}_{j,\te}$ and
${\cal T}_{J,\te}^+=\sum_{j>J}{\cal T}_{j,\te}$ for $J>q$
sufficiently large such that $\|{\cal T}_{J,\te}^+\|<\frac18$ and
$\|{\cal T}_{J,0}^+\|<\frac18$ (for that we use the
$h$-pseudo-differential calculus and the spectral theorem). Here,
$${\cal T}_{j,\te}=M_\te B_j\otimes
\<x_3\rangle_\te^{-\de_3}R_{j,\te}\<x_3\rangle_\te^{-\de_3},$$ with
$B_j=W^{\frac12}p_jW^{\frac12}$ and
$R_{j,\te}=(H_{0,\parallel}(\te)-z+2bj)^{-1}$. The operator
$\<x_3\rangle^{-\de_3}R_{j,0}\<x_3\rangle^{-\de_3}$ is of class
trace 
 (see \cite{MAPBVBCFGR}).
\bigskip

Further, let us decompose the self-adjoint operator $B_j$ into a
trace-class operator whose norm is bounded by $\ep/2$ for some
$\ep>0$ and an
operator of finite-rank 
 independent on $r$, namely
\begin{eqnarray}
B_j=B_j {\bf1}_{[0,\ep/2]}(B_j)+B_j{\bf1}_{]\ep/2,+\infty[}(B_j).
\end{eqnarray}
Then, for $j\neq q$, we have
\begin{eqnarray*}
{\cal T}_{j,\te}&=&M_\te B_j{\bf1}_{[0,\ep/2]}(B_j)\otimes
\<x_3\rangle_\te^{-\de_3}R_{j,\te}\<x_3\rangle_\te^{-\de_3}+M_\te
B_j{\bf1}_{]\ep/2,+\infty[}(B_j)\otimes
\<x_3\rangle_\te^{-\de_3}R_{j,\te}\<x_3\rangle_\te^{-\de_3}\\
&=&{\cal T}_{j,\te}^{<}+{\cal T}_{j,\te}^>.
\end{eqnarray*}

Let us now analyze the 
 term ${\cal T}_{q,\te}$. Denote by
$p_\parallel(\te)$ the spectral projection onto
Ker($H_{0,\parallel}(\te)-\la$). We have
$p_\parallel(\te)\cdot=\langle \cdot,\psi_{\bar\te}\rangle\psi_\te$
with $\psi_\te=U_\te^{-1}\psi$ 
and $\psi$ is an eigenfunction satisfying
$$H_{0,\parallel}\psi=\la\psi,\ \ \ \|\psi\|_{L^2(\R)}=1,\ \ \ \psi=\Bar\psi \mbox{ on }\R.$$
Then we have, for $\eta=z-2bq-\la$
\begin{equation*}
\begin{aligned}
{\cal T}_{q,\te}=& M_\te B_q\otimes
\<x_3\rangle_\te^{-\de_3}R_{q,\te}p_\parallel(\te)\<x_3\rangle_\te^{-\de_3}+M_\te
B_q\otimes
\<x_3\rangle_\te^{-\de_3}R_{q,\te}(I-p_\parallel(\te))\<x_3\rangle_\te^{-\de_3}
\\
=&-\frac1\eta\tau_q+ \ti{\cal T}_{q,\te},
\end{aligned}
\end{equation*}
with $\tau_q=M_\te B_q\otimes \<x_3\rangle_\te^{-\de_3}
p_\parallel(\te)\<x_3\rangle_\te^{-\de_3}$. We also have
\begin{eqnarray*}
\ti{\cal T}_{q,\te}&=&M_\te B_q{\bf1}_{[0,\ep/2]}(B_q)\otimes
\<x_3\rangle_\te^{-\de_3}R_{q,\te}(I-p_\parallel(\te))\<x_3\rangle_\te^{-\de_3}\\&
&+\;M_\te B_q{\bf1}_{]\ep/2,+\infty[}(B_q)\otimes
\<x_3\rangle_\te^{-\de_3}R_{q,\te}(I-p_\parallel(\te))\<x_3\rangle_\te^{-\de_3}\\
&=&\ti{\cal T}_{q,\te}^{<}+\ti{\cal T}_{q,\te}^>.
\end{eqnarray*}

 We denote by 
 $A^>(z)=\ti {\cal T}_{q,\te}^>+\sum_{j\neq q,\ j\leq J}{\cal
 T}_{j,\te}^>,$
and
$ A^<(z)=\ti {\cal T}_{q,\te}^<+\sum_{j\neq q,\ j\leq J}{\cal
 T}_{j,\te}^<+{\cal T}_{J,\te}^+.$
Then, we have \begin{equation}\label{eqdecompositioncalTV}
 {\cal T}_{V,\te}(z)=-\frac1\eta\tau_q+A^>(z)+A^<(z).
\end{equation}
We decompose the operator
$\tau_q$ into a trace-class operator whose norm is bounded by
$r\nu^{-1}$ for $\nu>0$ and an operator of finite-rank:
\begin{eqnarray*}\tau_{q}&=&M_\te B_q{\bf1}_{[0,\,r\nu^{-1}]}(B_q)\otimes
\<x_3\rangle_\te^{-\de_3}p_\parallel(\te)\<x_3\rangle_\te^{-\de_3}+M_\te
B_q{\bf1}_{]r\nu^{-1},+\infty[}(B_q)\otimes
\<x_3\rangle_\te^{-\de_3}p_\parallel(\te)\<x_3\rangle_\te^{-\de_3}\\&=&\tau_{q,1}+\tau_{q,2}.
\end{eqnarray*}
If we take $\ep$ sufficiently small and $\nu>0$ sufficiently large, we have the two following lemmas

\begin{lem}\label{lemcal T_V teta}
 Let $r_0>0$
 . For $z=2bq+\la+\eta\in \Om_q$ and $0<r<\emph{Im}\, \eta<r_0$, we have
\begin{equation}
 {\cal T}_{V,\te}(z)={\cal R}_\te(z)+ {\cal E}_\te(z),
\end{equation}
where the operator ${\cal R}_\te(z)\in S_1$ is the holomorphic operator defined by
\begin{equation}
{\cal R}_\te(z)=-\frac{1}{\eta}\tau_{q,2}+ A^>(z).
\end{equation}
The operator ${\cal E}_\te(z)\in S_2$ is the holomorphic operator
defined by
$${\cal E}_\te(z)= -\frac1\eta\tau_{q,1}+A^<(z).$$
Moreover, ${\cal E}_\te(z)$ satisfies the following estimate
\begin{equation}\label{eqestimationE_teta}
\|{\cal E}_\te(z)\|<\frac34.\end{equation}
\end{lem}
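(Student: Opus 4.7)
The plan is to verify that the decomposition already constructed in the text,
$$
{\cal T}_{V,\te}(z)=\Bigl(-\tfrac{1}{\eta}\tau_{q,2}+A^>(z)\Bigr)+\Bigl(-\tfrac{1}{\eta}\tau_{q,1}+A^<(z)\Bigr)={\cal R}_\te(z)+{\cal E}_\te(z),
$$
has the claimed properties. First I would establish the trace-class character and holomorphy of ${\cal R}_\te(z)$. The piece $\tau_{q,2}$ is of finite rank by construction since $B_q=W^{1/2}p_qW^{1/2}$ is a compact selfadjoint operator, so ${\bf 1}_{]r\nu^{-1},+\infty[}(B_q)$ is a finite-rank spectral projection; multiplying by the bounded operator $M_\te$ and tensoring with the trace-class operator $\<x_3\rangle_\te^{-\de_3}p_\parallel(\te)\<x_3\rangle_\te^{-\de_3}$ keeps it in $S_1$. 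Similarly each term in $A^>(z)$ is the tensor product of a finite-rank operator $M_\te B_j{\bf1}_{]\ep/2,+\infty[}(B_j)$ (respectively for $\ti{\cal T}_{q,\te}^>$) with a trace-class factor $\<x_3\rangle_\te^{-\de_3}R_{j,\te}\<x_3\rangle_\te^{-\de_3}$ (respectively with $I-p_\parallel(\te)$ inserted), using Lemma \ref{lemTVholomorphe z in Om_q} and the fact that $\<x_3\rangle^{-\de_3}R_{q,0}(I-p_\parallel)\<x_3\rangle^{-\de_3}$ is trace class (from \cite{MAPBVBCFGR}) together with analytic continuation in $\te$. Holomorphy follows since $z\in\Om_q\setminus\{2bq+\la\}$ avoids $\si_{ess}(H_{0,\te})$ for each $j\leq J$ and the singular part at $j=q$ has been explicitly extracted.

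Next I would handle ${\cal E}_\te(z)$. Each summand is Hilbert--Schmidt: the factor $\<x_3\rangle_\te^{-\de_3}R_{j,\te}\<x_3\rangle_\te^{-\de_3}$ is trace class hence in $S_2$, and $M_\te B_j{\bf1}_{[0,\ep/2]}(B_j)$ is bounded, so the tensor product is in $S_2$; the term ${\cal T}_{J,\te}^+$ is in $S_2$ by the same argument used to control $T_{V,\te}$ on the tail. Holomorphy in $z$ on $\Om_q\setminus\{2bq+\la\}$ is immediate from holomorphy of the resolvents (the only singular piece was $-\tau_q/\eta$, and its $\tau_{q,1}$ part is included in ${\cal E}_\te(z)$ with $\eta\neq 0$).

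The core estimate is the bound $\|{\cal E}_\te(z)\|<3/4$. I would split the norm into three pieces: (a) for $-\tau_{q,1}/\eta$, the operator norm of $B_q{\bf1}_{[0,r\nu^{-1}]}(B_q)$ is $\leq r\nu^{-1}$ and $\|\<x_3\rangle_\te^{-\de_3}p_\parallel(\te)\<x_3\rangle_\te^{-\de_3}\|\leq C_0$ uniformly in $\te\in D_\ep$, so using $|\eta|\geq\mbox{Im}\,\eta>r$ gives $\|\tau_{q,1}/\eta\|\leq C_0\|M_\te\|\,\nu^{-1}$; (b) for the sum over $j\neq q$, $j\leq J$, of ${\cal T}_{j,\te}^<$ and for $\ti{\cal T}_{q,\te}^<$, each satisfies $\|\cdot\|\leq(\ep/2)\|M_\te\|\,\sup_j\|\<x_3\rangle_\te^{-\de_3}R_{j,\te}(\cdot)\<x_3\rangle_\te^{-\de_3}\|\leq C_1\ep$, where the sup is finite because for $z\in\Om_q$ and $j\neq q$ (or $j=q$ with $p_\parallel(\te)$ removed) the resolvent $R_{j,\te}$ is uniformly bounded on $\Om_q$; (c) $\|{\cal T}_{J,\te}^+\|<1/8$ by the choice of $J$. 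Summing yields
$$
\|{\cal E}_\te(z)\|\leq C_0\|M_\te\|\,\nu^{-1}+C_1\ep J +\tfrac{1}{8},
$$
and choosing first $\ep$ small enough, then $\nu$ large enough (both independently of $r$ and of $z\in\Om_q$), forces the right-hand side to be $<3/4$.

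The main obstacle is item (b): one must check that, for $z\in\Om_q$ and $j\leq J$ with $j\neq q$, the regularized parallel resolvent $\<x_3\rangle_\te^{-\de_3}R_{j,\te}\<x_3\rangle_\te^{-\de_3}$ is bounded uniformly in $z$ and in $\te\in D_\ep^+$, and likewise that $\<x_3\rangle_\te^{-\de_3}R_{q,\te}(I-p_\parallel(\te))\<x_3\rangle_\te^{-\de_3}$ stays bounded as $z\to 2bq+\la$. This reduces, by the spectral decomposition of $\hpa(\te)$ from Lemma \ref{lem spectre ess of H 0 pa teta} together with (\ref{definition of Omq}), to the statement that $z+2b(q-j)$ (for $j\neq q$) and $z-2bq-\la$ restricted to the range of $I-p_\parallel(\te)$ stay at positive distance from $\si(\hpa(\te))$, which is exactly the content of the definition of $\Om_q$.
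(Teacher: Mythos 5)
Your decomposition and the order of the choices ($J$ first so that $\|{\cal T}_{J,\te}^+\|<\tfrac18$, then $\ep$ small, then $\nu$ large, all independent of $r$ and $z$) reproduce exactly the construction the paper carries out just before stating the lemma, which is the only proof the paper offers, so your write-up is essentially the intended argument made explicit. One small repair: membership of the $A^<$ summands in $S_2$ should be deduced from $B_j{\bf 1}_{[0,\ep/2]}(B_j)\in S_1$ (which holds because $p_jWp_j\in S_1$ when $\de_\perp>2$), not from the claim that a bounded operator tensored with a Hilbert--Schmidt one is Hilbert--Schmidt, which is false in general; likewise the quantity that must avoid $\si(\hpa(\te))$ in your last paragraph is $z-2bj$, not $z+2b(q-j)$.
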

Using the limiting absorption principle for
$\<x_3\rangle^{-\de_3}R_{j,0}\<x_3\rangle^{-\de_3},\ j<q$, the
decomposition (\ref{eqdecompositioncalTV}) is also available for
$\te=0$, and 
 we have
\begin{lem}\label{lemcal T_V zero}
For $z=2bq+\la+\eta\in\Om_q$ and $0<r<\emph{Im}\,\eta<r_0$, we have
\begin{equation}
 {\cal T}_{V,0}(z)={\cal R}_0(z)+ {\cal E}_0(z),
\end{equation}
with ${\cal R}_0(z)={\cal R}_\al(z)\Big|_{\al=0}$ and $\;{\cal
E}_0(z)={\cal E}_\al(z)\Big|_{\al=0}$.
 Moreover, ${\cal E}_0(z)$ satisfies the following estimate
\begin{equation}\label{eqestimationE_zero}
\|{\cal E}_0(z)\|<\frac34.\end{equation}
\end{lem}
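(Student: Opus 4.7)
The plan is to repeat the construction from Lemma \ref{lemcal T_V teta} at $\te=0$, exploiting the fact that $\im\eta>r>0$ keeps the spectral parameter $z=2bq+\la+\eta$ in the upper half-plane, so that by the spectral theorem each resolvent $(H_{0,\parallel}+2bj-z)^{-1}$ is bounded. Choosing $J$ large enough that $\|{\cal T}_{J,0}^+\|<1/8$ (the same $h$-pseudodifferential/spectral-theorem argument that worked for $\te\neq 0$ applies verbatim at $\te=0$), then decomposing each $B_j=B_j{\bf1}_{[0,\ep/2]}(B_j)+B_j{\bf1}_{]\ep/2,+\infty[}(B_j)$ and, for $j=q$, extracting the singular piece via $R_{q,0}=-\eta^{-1}p_\parallel+R_{q,0}(I-p_\parallel)$ with the further split $\tau_q=\tau_{q,1}+\tau_{q,2}$, one obtains the announced decomposition ${\cal T}_{V,0}(z)={\cal R}_0(z)+{\cal E}_0(z)$, with ${\cal R}_0$ and ${\cal E}_0$ given by setting $\al=0$ in the expressions from Lemma \ref{lemcal T_V teta}.

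The class-membership claims ${\cal R}_0(z)\in S_1$ and ${\cal E}_0(z)\in S_2$ reduce to the weighted-resolvent bounds $\<x_3\rangle^{-\de_3}R_{j,0}(z)\<x_3\rangle^{-\de_3}\in S_1$. For $j>q$ this is immediate since $\re(z-2bj)$ lies at fixed positive distance from $\si(\hpa)$, while for $j\leq q$ it is the standard one-dimensional limiting absorption principle for $\hpa$, valid because $\de_3=\de_\parallel/2>1/2$, and already used in \cite{MAPBVBCFGR}. Combined with the finite rank of each $B_j{\bf1}_{]\ep/2,+\infty[}(B_j)$ (compactness of $B_j=W^{1/2}p_jW^{1/2}$ follows from $\de_\perp>2$), this gives ${\cal R}_0(z)\in S_1$ and ${\cal E}_0(z)\in S_2$.

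For the operator-norm bound (\ref{eqestimationE_zero}) I would split ${\cal E}_0(z)$ into its three contributions. The singular piece satisfies
\begin{equation*}
\|\eta^{-1}\tau_{q,1}\|\leq|\eta|^{-1}\|M\|_\infty\,(r/\nu)\,\|\<x_3\rangle^{-\de_3}p_\parallel\<x_3\rangle^{-\de_3}\|\leq C/\nu,
\end{equation*}
which is $<1/4$ for $\nu$ large. Each of the finitely many summands $\ti{\cal T}_{q,0}^<$ and ${\cal T}_{j,0}^<$ ($j\leq J,\ j\neq q$) has norm at most $C\ep/2$ times a uniformly bounded weighted resolvent, so their total norm is $<1/4$ once $\ep$ is small. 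The remainder satisfies $\|{\cal T}_{J,0}^+\|<1/8$ by the choice of $J$, and summing these three contributions yields $\|{\cal E}_0(z)\|<3/4$.

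The main obstacle is precisely the uniform boundedness of $\<x_3\rangle^{-\de_3}R_{j,0}(z)\<x_3\rangle^{-\de_3}$ for $j<q$ on the region $\Om_q\cap\{\im z>r\}$: there $\re(z-2bj)$ sits in the essential spectrum $[0,+\infty[$ of $\hpa$, and the unweighted resolvent is singular as $\im z\to 0^+$. Only after conjugation by the weights (with exponent $>1/2$) does one obtain bounded, even trace-class, boundary values. This is the content of the limiting absorption principle invoked in the hint preceding the lemma and constitutes the essential new ingredient distinguishing the $\te=0$ case from the $\te\neq 0$ case, where the analytic distortion instead rotates the essential spectrum off the real axis.
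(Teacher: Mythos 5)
Your proposal is correct and follows essentially the same route as the paper, which disposes of this lemma in one sentence by invoking the limiting absorption principle for the weighted resolvents $\<x_3\rangle^{-\de_3}R_{j,0}\<x_3\rangle^{-\de_3}$, $j<q$, and then transporting the decomposition (\ref{eqdecompositioncalTV}) to $\te=0$; you correctly identify that LAP step (valid since $\de_3=\de_\parallel/2>1/2$, with the trace-class boundary values taken from \cite{MAPBVBCFGR}) as the only genuinely new ingredient compared with the distorted case. Your explicit bookkeeping of the three contributions to $\|{\cal E}_0(z)\|$ is consistent with the choices of $J$, $\ep$ and $\nu$ already fixed in the construction preceding Lemma \ref{lemcal T_V teta}.
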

\begin{prop}\label{prop tild D teta coincide}
 Let $V$ and $v_0$ as in Section \ref{secassumption}. For $0<r<|\eta|<r_0$ with $r_0$ sufficiently small, $z=2bq+\la+\eta\in\Om_q$
 is a resonance of $H$ if and only if $z$ is a zero of
\begin{equation}\label{eqD(z,s)}
 {\cal D_\te(z,r)}=\det\left(I+{\cal R}_\te(z)(I+{\cal E}_\te(z))^{-1}\right),
\end{equation}
where ${\cal R}_\te(z)$ is a class trace operator.
 Moreover, for $\emph{Im } z>0$, the determinant
${\cal D}_\te(z,s)$ coincides with
$${\cal D}_0(z,s)=\det\left(I+{\cal R}_0(z)(I+{\cal
E}_0(z))^{-1}\right).$$

\end{prop}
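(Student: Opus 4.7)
The strategy is to reduce the vanishing of $\mbox{det}_2(I+{\cal T}_{V,\te}(z))$ to that of an ordinary Fredholm determinant $\det(I+\cdot)$, using the splitting ${\cal T}_{V,\te}(z)={\cal R}_\te(z)+{\cal E}_\te(z)$ of Lemma \ref{lemcal T_V teta} and the fact that $I+{\cal E}_\te(z)$ is invertible. Once the equivalence with ${\cal D}_\te(z,r)=0$ is established, the coincidence with ${\cal D}_0(z,r)$ for $\im z>0$ follows by the same analyticity-in-$\te$ argument already used at the end of the proof of Proposition \ref{propresonance=zero}.

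Concretely, by (\ref{eqestimationE_teta}) one has $\|{\cal E}_\te(z)\|\leq 3/4<1$, so $I+{\cal E}_\te(z)$ is boundedly invertible via a convergent Neumann series. Then the algebraic identity
\begin{equation*}
I+{\cal T}_{V,\te}(z)=\bigl(I+{\cal R}_\te(z)(I+{\cal E}_\te(z))^{-1}\bigr)(I+{\cal E}_\te(z))
\end{equation*}
shows that $I+{\cal T}_{V,\te}(z)$ is non-invertible iff $I+{\cal R}_\te(z)(I+{\cal E}_\te(z))^{-1}$ is. The latter is a trace-class perturbation of the identity, since ${\cal R}_\te(z)\in S_1$ by Lemma \ref{lemcal T_V teta} and $(I+{\cal E}_\te(z))^{-1}$ is bounded; hence the Fredholm determinant ${\cal D}_\te(z,r)$ is well defined and, by the Fredholm alternative, vanishes exactly when $I+{\cal R}_\te(z)(I+{\cal E}_\te(z))^{-1}$ has non-trivial kernel. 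Combined with Proposition \ref{propresonance=zero}, which characterizes the resonances of $H$ as the zeros of $\mbox{det}_2(I+{\cal T}_{V,\te}(z))$, this yields the first assertion.

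For the second assertion I would argue as in Proposition \ref{propresonance=zero}. For real $\te\in D_\ep$ the operator $I_\perp\otimes U_\te$ is unitary, so ${\cal R}_\te(z)$ and ${\cal E}_\te(z)$ are unitarily conjugate to ${\cal R}_0(z)$ and ${\cal E}_0(z)$; then $(I+{\cal E}_\te(z))^{-1}$ is conjugate to $(I+{\cal E}_0(z))^{-1}$, and invariance of the ordinary determinant under unitary conjugation gives ${\cal D}_\te(z,r)={\cal D}_0(z,r)$ for real $\te$. At any fixed $z$ with $\im z>0$, both sides are holomorphic in $\te\in D_\ep$: for such $z$ the resolvent $(H_{0,\te}-z)^{-1}$ exists (by (\ref{eqspectreH0teta}), $\si(H_{0,\te})$ stays away from the upper half plane), and Proposition \ref{propH_te de type A}(i) together with the analyticity of $\phi_\te$ and $\langle x_3\rangle_\te$ propagates holomorphy through each building block of ${\cal R}_\te$ and ${\cal E}_\te$. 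Uniqueness of holomorphic extension then extends the equality to all $\te\in D_\ep$.

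The main obstacle is verifying that the decomposition of ${\cal T}_{V,\te}(z)$ depends holomorphically on $\te$ in the appropriate ideal topologies ($S_1$ for ${\cal R}_\te(z)$, $S_2$ for ${\cal E}_\te(z)$), so that ${\cal R}_\te(z)(I+{\cal E}_\te(z))^{-1}$ inherits holomorphic $S_1$-dependence; this is what legitimizes both the Fredholm determinant ${\cal D}_\te(z,r)$ and the analytic-continuation step above. This holomorphic behaviour ultimately reduces to Proposition \ref{propH_te de type A}(i) applied piecewise to the summands built just before Lemma \ref{lemcal T_V teta}, but tracking it uniformly in $z$ on the annulus $0<r<|\eta|<r_0$ requires a little bookkeeping.
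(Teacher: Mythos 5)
Your treatment of the first assertion is essentially the paper's: the factorization $I+{\cal T}_{V,\te}(z)=\bigl(I+{\cal R}_\te(z)(I+{\cal E}_\te(z))^{-1}\bigr)(I+{\cal E}_\te(z))$ together with $\|{\cal E}_\te(z)\|<\frac34$ is exactly how the paper reduces the vanishing of $d_\te(z)=\mbox{det}_2(I+{\cal T}_{V,\te}(z))$ to that of ${\cal D}_\te(z,r)$; the paper phrases it as $d_\te(z)={\cal D}_\te(z,r)\det\bigl((I+{\cal E}_\te(z))e^{-{\cal T}_{V,\te}(z)}\bigr)$ with non-vanishing second factor, which in addition gives equality of multiplicities.

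The gap is in the second assertion. You claim that at each fixed $z$ with $\im z>0$ the map $\te\mapsto{\cal D}_\te(z,r)$ is holomorphic on $D_\epsilon$, and you defer the justification to ``a little bookkeeping''. This is precisely the step the paper flags as problematic: the decomposition ${\cal T}_{V,\te}={\cal R}_\te+{\cal E}_\te$ rests on choices (the index $J$ with $\|{\cal T}_{J,\te}^+\|<\frac18$, the parameters $\ep$ and $\nu$ guaranteeing $\|{\cal E}_\te(z)\|<\frac34$) that are \emph{not uniform in $\te$}; near the real axis one cannot guarantee that a single choice keeps $I+{\cal E}_\te(z)$ invertible for all $\te$ simultaneously, so holomorphy of ${\cal D}_\te(z,r)$ in $\te$ there is genuinely unclear, not merely tedious. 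The paper's workaround is a two-step continuation: by the $h$-pseudo-differential calculus the resolvents $(H_{0,\parallel}(\al)-z+2bj)^{-1}$, $j\le J$, are bounded uniformly in $\al\in D_\epsilon^+$ once $\im z$ is sufficiently large, so $\te\mapsto{\cal D}_\te(z,r)$ is holomorphic for $\im z\gg1$; there the unitary-conjugation identity for $\te\in\R$ plus uniqueness of analytic continuation in $\te$ gives ${\cal D}_\te(z,r)={\cal D}_0(z,r)$. Then, for each fixed $\te$, both $z\mapsto{\cal D}_\te(z,r)$ and $z\mapsto{\cal D}_0(z,r)$ are holomorphic on $\{\im z>0\}$ (Lemmas \ref{lemcal T_V teta} and \ref{lemcal T_V zero}), and uniqueness of continuation in $z$ pushes the identity down to all of $\im z>0$. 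Without this detour, or an actual proof of the uniformity in $\te$ you would need, your continuation-in-$\te$ argument at fixed $z$ near the real axis does not go through.
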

\begin{proof}
%
By Proposition \ref{propresonance=zero}
, for $r<|\eta|<r_0$, $z$ is a resonance of $H$ if and only if $z$
is a zero of $ d_\te(z)=\det_2(I+{\cal R}_\te(z)+{\cal E}_\te(z))$.
We can write
$$ d_\te(z)=\det(I+{\cal R}_\te(z)(I+{\cal E}_\te(z))^{-1})\det((I+{\cal E}_\te(z))e^{-{\cal T}_{V,\te}(z)}).$$
According to (\ref{eqestimationE_teta}), we have $\det((I+{\cal
E}_\te(z))e^{-{\cal T}_{V,\te}(z)})\neq0,$ and then the zeros of $
d_\te(z)$ are the zeros of ${\cal D}_\te(z,r)$ with the same
multiplicity.
\bigskip

Using the theory of $h$-pseudo-differential operators (see
\cite{MDJS}), the resolvent $(H_{0,\parallel}(\al)-z+2bj)^{-1}$ is
uniformly bounded for $\al\in D_\epsilon^+$, $j\leq J$ and $\im z>0$
sufficiently large.
Then for $z$ fixed with $\mbox{Im}z\gg 1$, $\te\to {\cal
D}_\te(z,r)$ is a holomorphic function on $D_\epsilon^+$ (since the
construction of ${\cal E}_\te(z)$ is not uniform with respect to
$\te$, this property is not clear for $\im z>0$ near the real axis).
 Using that for  $\te\in\R$
$${\cal D}_\te(z,r)=\det\left(I+U_\te {\cal R}_0(z)(I+{\cal
E}_0(z))^{-1}U_\te^{-1}\right)=\det\left(I+{\cal R}_0(z)(I+{\cal
E}_0(z))^{-1}\right),$$ the function $\te\longmapsto{\cal
D}_\te(z,r)$ is constant on the real axis. Thus, by uniqueness of
the extension on $\te$, the determinant ${\cal D}_\te(z,r)$
coincides with ${\cal D}_0(z,r)$ for $\im z\gg1$
and $\te\in D_\epsilon^+ $. 
Moreover, since for $\te$ fixed in $D_{\epsilon}^{+}$,
$z\longmapsto{\cal D}_\te(z,r)$ and $z\longmapsto{\cal D}_0(z,r)$
are well defined and holomorphic for $\im z>0$ (see Lemmas \ref{lemcal T_V teta}, \ref{lemcal T_V zero}), ${\cal D}_\te(z,r)$ coincides with ${\cal D}_0(z,r)$ for $\im z>0$.
\end{proof}

Since there exists an operator $C:\ L^2(\R^2)\to L^2(\R^2)$ such
that $B_q=C^*C$ and $CC^*=p_qW(X_\perp)p_q$ (see \cite{CFGR} and
\cite{JBVBGR}), then for any $r>0$ we have
\begin{equation}\label{eqn_+Bq=n_+pqWpq}
 n_+(r,B_q)=n_+(r,p_qWp_q),
\end{equation}
where, for a compact self-adjoint operator $A$ and $r>0$, we set
$n_+(r,A)=\mbox{rank\,}{\bf1}_{(r,+\infty)}(A)$.
\begin{lem}\label{lemmajorationdeterminant}
For $z=2bq+\la+\eta\in\Om_q$ and $0<r<|\eta|<r_0$, there exists $\nu>0$ such that 
\begin{equation}\label{eq minoration of Ti D teta (z,s)}
{\cal D}_\te(z,r)=O(1)\exp\left(O(n_+(r,\nu p_qWp_q)+1)|\ln
r|\right).
\end{equation}
\end{lem}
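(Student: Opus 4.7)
The plan is to apply the classical finite-rank determinant inequality
$$|\det(I+K)|\le (1+\|K\|)^{\mathrm{rank}(K)}$$
to the operator $K:=\mathcal{R}_\te(z)(I+\mathcal{E}_\te(z))^{-1}$. By Lemma \ref{lemcal T_V teta} we have $\|\mathcal{E}_\te(z)\|<3/4$, so $(I+\mathcal{E}_\te(z))^{-1}$ is bounded with norm at most $4$, uniformly in $r$, $z$, and $\te$. Thus it will suffice to control the rank and the operator norm of $\mathcal{R}_\te(z)=-\frac{1}{\eta}\tau_{q,2}+A^>(z)$ separately.

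For the rank, recall that
$$\tau_{q,2}= M_\te B_q \mathbf{1}_{(r\nu^{-1},+\infty)}(B_q)\otimes \langle x_3\rangle_\te^{-\de_3}p_\parallel(\te)\langle x_3\rangle_\te^{-\de_3}.$$
Since $p_\parallel(\te)$ has rank $1$ (the eigenvalue $\la$ of $\hpa$ being simple), I would bound the rank of $\tau_{q,2}$ by the rank of $B_q \mathbf{1}_{(r\nu^{-1},+\infty)}(B_q)$, namely $n_+(r\nu^{-1},B_q)=n_+(r,\nu B_q)=n_+(r,\nu p_q W p_q)$ after using (\ref{eqn_+Bq=n_+pqWpq}) and absorbing $\nu$. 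The operator $A^>(z)=\Ti{\mathcal{T}}_{q,\te}^>+\sum_{j\neq q,\,j\le J}\mathcal{T}_{j,\te}^>$ only involves the cut-offs $B_j \mathbf{1}_{(\ep/2,+\infty)}(B_j)$ with $\ep$ and $J$ fixed independently of $r$, so its rank is bounded by a constant $N_0$. In total
$$\mathrm{rank}(\mathcal{R}_\te(z))\le n_+(r,\nu p_q W p_q)+N_0.$$

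For the norm, $\tau_{q,2}$ is uniformly bounded because $M_\te$, $B_q$ and $\langle x_3\rangle_\te^{-\de_3}p_\parallel(\te)\langle x_3\rangle_\te^{-\de_3}$ all are, so $\|\eta^{-1}\tau_{q,2}\|=O(|\eta|^{-1})=O(r^{-1})$ using the hypothesis $|\eta|>r$. The key point is that $A^>(z)$ remains uniformly bounded on $\Om_q$: for $j\neq q$, $R_{j,\te}(z)$ is holomorphic in a neighborhood of $2bq+\la$, while for $j=q$ the singular rank-one part has been removed, so $R_{q,\te}(I-p_\parallel(\te))$ is regular at $z=2bq+\la$. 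Hence $\|K\|=O(r^{-1})$ and the determinant inequality yields
$$|\mathcal{D}_\te(z,r)|\le \bigl(1+O(r^{-1})\bigr)^{n_+(r,\nu p_q W p_q)+N_0}=O(1)\exp\bigl(O(n_+(r,\nu p_q W p_q)+1)|\ln r|\bigr),$$
which is the claimed bound, since $\ln(1+O(r^{-1}))=O(|\ln r|)$ as $r\searrow0$.

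The main technical obstacle is the uniform control of $\|A^>(z)\|$ on $\Om_q$, which rests on the fact that the only singularity of $\mathcal{R}_\te(z)$ at $z=2bq+\la$ has been isolated in the explicit prefactor $\eta^{-1}$ in front of $\tau_{q,2}$; this in turn relies on the decomposition carried out before Lemma \ref{lemcal T_V teta}, where $p_\parallel(\te)$ is extracted from $R_{q,\te}$ and where the tail $\mathcal{T}_{J,\te}^+$ is made small via $h$-pseudo-differential calculus. Once this regularity is in hand, the estimate is a direct consequence of the finite-rank determinant inequality combined with the rank estimate based on $n_+(r,\nu p_q W p_q)$.
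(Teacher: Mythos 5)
Your overall strategy (finite-rank determinant inequality $|\det(I+K)|\le(1+\|K\|)^{\operatorname{rank}K}$ applied after inverting $I+\mathcal{E}_\te$, rank controlled by $n_+(r,\nu p_qWp_q)$, norm controlled by $O(r^{-1})$) is the same as the paper's, but there is a genuine gap in your rank estimate: the operator $A^>(z)$ is \emph{not} of finite rank, so the claim $\operatorname{rank}(\mathcal{R}_\te(z))\le n_+(r,\nu p_qWp_q)+N_0$ is false as stated. Each term $\mathcal{T}_{j,\te}^>$ has the form $M_\te\,B_j\mathbf{1}_{]\ep/2,+\infty[}(B_j)\otimes\langle x_3\rangle_\te^{-\de_3}R_{j,\te}\langle x_3\rangle_\te^{-\de_3}$: the first tensor factor is indeed finite rank, but the second is a sandwiched resolvent of a one-dimensional Schr\"odinger operator, which is trace class with infinite rank (it is injective with dense range). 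A tensor product of a finite-rank operator with an infinite-rank operator has infinite rank, so $A^>(z)$ lies in $S_1$ but $\operatorname{rank}A^>(z)=\infty$. Replacing the rank bound by a trace-norm bound does not save the argument either: $\|\eta^{-1}\tau_{q,2}\|_1=O(r^{-1})$, so $|\det(I+K)|\le e^{\|K\|_1}$ would only give $e^{O(1/r)}$, much worse than the claimed $e^{O((n_++1)|\ln r|)}$.

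The missing idea, which is exactly the extra step in the paper, is to further decompose $A^>(z)=A_0^>+\ti A^>(z)$, where $A_0^>$ is a genuinely finite-rank operator independent of $z$ and $\ti A^>(z)$ is holomorphic in $S_1$ with $\|\ti A^>(z)\|_{\mathrm{tr}}\le\frac18$ (this uses that trace-class operators are approximable in trace norm by finite-rank ones, uniformly on the compact set $\Om_q$). One then absorbs $\ti A^>(z)$ into the invertible factor, writing $\mathcal{D}_\te(z,r)=\det\bigl(I+K_\te(z,r)\bigr)\det\bigl(I+\ti A^>(z)(I+\mathcal{E}_\te)^{-1}\bigr)$ with $K_\te(z,r)=(-\eta^{-1}\tau_{q,2}+A_0^>)(I+\mathcal{E}_\te+\ti A^>(z))^{-1}$. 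Now $K_\te$ really is finite rank of size $O(n_+(r,\nu p_qWp_q)+1)$ with norm $O(r^{-1})$, your determinant inequality applies to it, and the second factor is uniformly bounded since $\|\ti A^>(z)(I+\mathcal{E}_\te)^{-1}\|_1\le\frac18\cdot 4$. With this modification the rest of your computation goes through verbatim.
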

\begin{proof}
%
Since $z\to A^>(z)$ is holomorphic near $z=2bq+\la$ or $\eta=0$ with
values in $S_1$, for $r_0$ sufficiently small, there exist a
finite-rank operator $A_0^>$ independent of $z$ and $\ti A^>(z)$
holomorphic in $S_1$ near $z=2bq+\la$ with $\|\ti
A^>(z)\|_{\rm{tr}}\leq\frac18$, $|\eta|\leq r_0$ such that
\begin{equation}\label{eqdecompositioncalA>}
A^>(z)=A_0^>+\ti A^>(z).
\end{equation} Since we have $\|\ti
A^>(z)\|_{\rm{tr}}\leq\frac18$, for
$0<r<|\eta|<r_0$,
$$\det\Big(I+\ti A^>(z)(I+{\cal E}_\te)^{-1}\Big)\neq0.$$

It follows that for $0<r<|\eta|<r_0$, the zeros of ${\cal
D}_\te(z,r)$ are the zeros of
\begin{equation}\label{eqD(z,s)}
 D_\te(z,r)=\det\left(I+K_\te(z,r)\right),
\end{equation}
 with
\begin{eqnarray}
K_\te(z,r)=\big(-\frac1\eta\tau_{q,2}+A_0^>\big)\Big(I+{\cal
E}_\te+\ti A^>(z)\Big)^{-1}.
\end{eqnarray}
We recall that $\tau_{q,2}=M_\te
B_q{\bf1}_{]r\nu^{-1},+\infty[}(B_q)\otimes
\<x_3\rangle_\te^{-\de_3}p_\parallel(\te)\<x_3\rangle_\te^{-\de_3}$.
Since the rank of the projector $p_\parallel(\te)$ is equal to $1$,
the rank of the operator $K_\te(z,r)$ is bounded by
$O(n_+(r\nu^{-1},B_q)+1)=O(n_+(r,\nu p_qWp_q)+1)$ (see
(\ref{eqn_+Bq=n_+pqWpq})) and its norm is bounded by
$O(|\eta|^{-1})=O(r^{-1})$ (see also Proposition \ref{prop tild D
teta coincide}).
\medskip

By the properties of $K_\te(z,r)$ for $0<r<|\eta|=|z-2bq-\la|<r_0$,
we have
\begin{equation}
D_\te(z,r)=\prod_{j=1}^{O(n_+(r,\,\nu
p_qWp_q)+1)}(1+\la_j(z,r))=O(1)\exp\left(O(n_+(r,\nu p_qWp_q)+1)|\ln
r|\right),
\end{equation}
uniformly with respect to $(z,r)$, where $\la_j(z,r)$ are the
eigenvalues of $K_\te(z,r)$ which satisfy $\la_j(z,r)=O(|r|^{-1})$.
Since
\begin{equation}\label{eq TiD=D*det}
{\cal D}_\te(z,r)=D_\te(z,r)\det\Big(I+\ti A^>(z)(I+{\cal
E}_\te)^{-1}\Big),
\end{equation}
and the norm of $\det\Big(I+\ti A^>(z)(I+{\cal E}_\te)^{-1}\Big)$ is
uniformly  bounded, the lemma follows.
\end{proof}
\begin{lem}
 For
$z=2bq+\la+\eta\in\Om_q$ , and $0<r<\emph{Im }\eta<r_0$, there exists $\nu>0$ such that 
\begin{equation}\label{eq majoration of Ti D teta (z,s)}
 |{\cal D}_0(z,r)|\geq C\exp\left(-C(n_+(r,\nu p_qWp_q)+1)|\ln r|\right),
\end{equation}
uniformly with respect to $(z,r)$.
\end{lem}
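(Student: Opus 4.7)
My plan is to factor ${\cal D}_0(z,r)$ using \eqref{eq TiD=D*det} at $\te=0$,
$${\cal D}_0(z,r)=D_0(z,r)\,\det\!\bigl(I+\ti A^>(z)(I+{\cal E}_0(z))^{-1}\bigr),$$
and to lower-bound each factor separately. The second factor is uniformly bounded below by a positive constant: by \eqref{eqestimationE_zero} one has $\|(I+{\cal E}_0)^{-1}\|\le 4$, and $\|\ti A^>\|_{S_1}\le 1/8$ from \eqref{eqdecompositioncalA>}, so $\|\ti A^>(I+{\cal E}_0)^{-1}\|_{S_1}\le 1/2$ and its modulus lies in $[e^{-1/2},e^{1/2}]$. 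Thus it suffices to show $|D_0(z,r)|\ge\exp\bigl(-C(N+1)|\ln r|\bigr)$, where $N:=n_+(r,\nu p_qWp_q)$.

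By Proposition \ref{propresonance=zero} the zeros of $D_0(\cdot,r)$ in $\Om_q$ are exactly the resonances of $H$; since they all satisfy $\im(z-2bq-\la)\le 0$, the function $D_0(\cdot,r)$ is zero-free on $\{0<\im\,\eta<r_0\}\cap\Om_q$. Since $K_0(z,r)=\bigl(-\tau_{q,2}/\eta+A_0^>\bigr)(I+{\cal E}_0+\ti A^>)^{-1}$ has rank $\le N$ and norm $O(1/r)$, we can write $D_0(z,r)=\prod_{j=1}^{N}\bigl(1+\la_j(z,r)\bigr)$ and show that no factor approaches $0$. The leading piece of $K_0$ is $-\tau_{q,2}/\eta$ with $\tau_{q,2}$ non-negative and non-zero eigenvalues $\mu_k\in[r\nu^{-1},\|W\|_\infty]$: since $\mu_k$ is real and $\im\,\eta>r$,
$$\bigl|1-\mu_k/\eta\bigr|=\frac{|\eta-\mu_k|}{|\eta|}\ge\frac{\im\,\eta}{|\eta|}\ge\frac{r}{r_0}.$$
The fixed-rank correction $A_0^>$ and the bounded conjugation by $(I+{\cal E}_0+\ti A^>)^{-1}$ preserve this lower bound up to absolute constants, so $|1+\la_j(z,r)|\gtrsim r$ for every $j$, and hence $|D_0(z,r)|\gtrsim r^N\ge\exp(-N|\ln r|)$, which is the required inequality.

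The main obstacle is to rigorously justify the final perturbative step: the $\la_j(z,r)$ are not single-valued holomorphic functions of $z$, and they depend on $z$ through both $-\tau_{q,2}/\eta$ and the conjugating factor $(I+{\cal E}_0+\ti A^>)^{-1}$, so passing from the explicit eigenvalues $-\mu_k/\eta$ to the true $\la_j(z,r)$ is delicate. A cleaner alternative would be a Jensen / minimum-modulus argument: using the upper bound $|D_0|\le\exp(CN|\ln r|)$ from the preceding lemma, the absence of zeros on $\{\im\,\eta>0\}\cap\Om_q$, and a reference estimate $|D_0(z_*,r)|\ge c>0$ at a fixed point $z_*=2bq+\la+ir_0/2$ (where $\|K_0(z_*,r)\|=O(1/r_0)$ is independent of $r$ and the finite product over the $N$ eigenvalues can be controlled directly by the previous paragraph), one transfers the lower bound to an arbitrary $z$ with $\im\,\eta>r$ through Harnack's inequality for the harmonic function $\log|D_0(\cdot,r)|$ on a disk contained in $\{\im\,\eta>r/2\}$, losing only a universal constant in the exponent.
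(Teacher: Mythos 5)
Your factorization of ${\cal D}_0$ and the lower bound on the factor $\det\big(I+\ti A^>(z)(I+{\cal E}_0(z))^{-1}\big)$ agree with the paper, but neither of your two routes to the lower bound for $D_0(z,r)=\det(I+K_0(z,r))$ closes, and the key idea of the paper's proof is absent. First route: besides the perturbative step you yourself flag as unjustified (eigenvalues of a product of non-commuting operators are not controlled by the eigenvalues of the factors), the premise is already wrong: $\tau_{q,2}=M B_q{\bf1}_{]r\nu^{-1},+\infty[}(B_q)\otimes\langle x_3\rangle^{-\de_3}p_\parallel\langle x_3\rangle^{-\de_3}$ carries the bounded factor $M$ (defined by $V=W\langle x_3\rangle^{-2\de_3}M$), which is not sign-definite, so $\tau_{q,2}$ is not self-adjoint and its nonzero eigenvalues $\mu_k$ need not be real and non-negative; the estimate $|1-\mu_k/\eta|\geq \im\eta/|\eta|$ then fails. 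Second route: Harnack's inequality cannot give the bound ``losing only a universal constant''. Your reference point $z_*$ sits at height $r_0/2$ while $z$ may sit at height barely above $r$; the Harnack constant between two such points in the zero-free region (whether via the half-plane Poisson kernel or a chain of $O(|\ln r|)$ doubling disks) is of size $(r_0/r)^{C}$, and it \emph{multiplies} the non-negative harmonic function $CN|\ln r|-\log|D_0|$. This yields only $|D_0(z,r)|\geq\exp(-C N r^{-C}|\ln r|)$, far weaker than the claimed $\exp(-C(N+1)|\ln r|)$.

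The paper's proof instead inverts $I+K_0$ explicitly. By construction $I+K_0=\big(I+{\cal T}_{V,0}(z)\big)\big(I+\Ti{\cal E}_0\big)^{-1}$ with $\|\Ti{\cal E}_0\|<\frac78$, and the resolvent identity gives $\big(I+{\cal T}_{V,0}(z)\big)^{-1}=I-W^{\frac12}\langle x_3\rangle^{-\de_3}M(H-z)^{-1}\langle x_3\rangle^{-\de_3}W^{\frac12}$; since the undistorted $H$ is self-adjoint, the spectral theorem gives $\|(I+K_0)^{-1}\|=O(1+|\im z|^{-1})=O(r^{-1})$ for $\im\eta>r$. Writing $D_0(z,r)^{-1}=\det\big(I-K_0(I+K_0)^{-1}\big)$ and using that $K_0(I+K_0)^{-1}$ has rank $O(n_+(r,\nu p_qWp_q)+1)$ and norm $O(r^{-2})$ yields $|D_0(z,r)|^{-1}\leq\exp\big(O((N+1))|\ln r|\big)$, which is the lemma. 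This quantitative invertibility of $I+K_0$, inherited from the self-adjointness of $H$, is the actual content of the statement and cannot be recovered from the upper bound and zero-freeness alone.
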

\begin{proof}
Repeating the argument (\ref{eqdecompositioncalA>}) in the proof of
Lemma \ref{lemmajorationdeterminant} for $\te=0$ and using Lemma
\ref{lemcal T_V zero}, 
 there
exist a finite-rank operator $K_0(z,r)$ satisfying
$$\mbox{rank }K_0(z,r)=O\left(n_+(r,\nu p_qWp_q)+1\right),\ \ \ \ \ \|K_0(z,r)\|=O(r^{-1}),$$
uniformly with respect to $r<|\eta|<r_0$ and an operator
$\varepsilon(z)$ such that
$${\cal
D}_0(z,r)=\det\left(I+K_0(z,r)\right) 
\det(I+\varepsilon(z))$$ with
$\|\varepsilon(z)\|_{\rm{tr}}\leq\frac34$ (see (\ref{eq
TiD=D*det})).
\\\\
Let us now estimate 
$D_0(z,r)^{-1}= \det\left(I+K_0(z,r)\right)^{-1}$. For
$\mbox{Im}z>r$ , we have
\begin{equation}\label{eq D_0(z,s)-1}D_0(z,r)^{-1}=\det\left((I+K_0)^{-1}
\right)=\det\left(I-K_0(I+K_0)^{-1}\right).\end{equation}
\\
By the construction of $K_0$, it satisfies 
$$I+K_0=\Big(I+{\cal T}_{V,0}(z)\Big)\Big(I+\Ti{\cal E}_0\Big)^{-1},$$
with $\Ti{\cal E}_0$ an operator bounded as $\|\Ti{\cal
E}_0\|<\frac78$ and
$${\cal T}_{V,0}(z)={\cal T}_{V,\te}(z)\Big|_{\te=0}=W^\frac{1}{2}\langle
x_3\rangle^{-\de_3}M(H_0-z)^{-1}\langle
x_3\rangle^{-\de_3}W^{\frac12}.$$ Using the resolvent equation, the
operator $I+{\cal T}_{V,0}(z)$ is invertible for $\im z>r>0,$ and
$$\Big(I+{\cal T}_{V,0}(z)\Big)^{-1}=I-W^\frac{1}{2}\langle
x_3\rangle^{-\de_3}M(H-z)^{-1}\langle
x_3\rangle^{-\de_3}W^{\frac12}.$$ Then $I+K_0$ is invertible for
$\im z>r$ and from the spectral theorem 
\begin{equation}\label{eq (I+K0)-1}\|(I+K_0)^{-1}\|=O(1+\|W^\frac{1}{2}\langle
x_3\rangle^{-\de_3}M(H-z)^{-1}\langle
x_3\rangle^{-\de_3}W^{\frac12}\|)=O(1+\frac{1}{|\im
z|}).\end{equation} Since the operator $K_0$ is of finite-rank
$O(n_+(r,\nu p_qWp_q)+1)$ and using (\ref{eq D_0(z,s)-1}) and
(\ref{eq (I+K0)-1}), we obtain the lemma.
\end{proof}

 The following lemma contains a version of the well known Jensen
inequality which is suitable for our purposes (see \cite{JBVBGR} for
the proof).
\begin{lem}
 Let $\Om$
 be a simply connected domain of $\C$ and let $g$ be a holomorphic
  function in $\Om$ with continuous extension to $\Bar\Om$. Assume there exists
 $z_0
 \in\Om$
 such that $g(z_0)\neq 0$ and $g(z)\neq0$ for $z\in\p\Om$.
  Let $z_1,z_2,\dots,\,z_N\in\Om$ be the zeros of $g$ repeated according to their multiplicity.
   For any domain $\Om'\subset\subset\Om$, there exists $C>0$ such that $N(\Om',g)$, the number of zeros $z_j$ of $g$ contained in $\Om'$,
 satisfies
$$N(\Om',g)\leq C\left(\int_{\p\Om}|\ln|g(z)||dz+|\ln|g(z_0)|| \right).$$
\end{lem}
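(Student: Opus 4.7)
The plan is to reduce the statement to the classical Jensen formula on the unit disc via the Riemann mapping theorem. Since $\Omega$ is simply connected, I would pick a conformal map $\phi\colon\Omega\to\mathbb{D}$ with $\phi(z_0)=0$, and set $F:=g\circ\phi^{-1}$. Then $F$ is holomorphic on $\mathbb{D}$ with $F(0)=g(z_0)\neq0$ and zeros $w_j:=\phi(z_j)$, with the same multiplicities as the $z_j$. Because $\Omega'\subset\subset\Omega$, the image $\phi(\Omega')$ is relatively compact in $\mathbb{D}$, so there exists $r=r(\Omega,\Omega')\in(0,1)$ with $|w_j|\leq r$ whenever $z_j\in\Omega'$.

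Next I would apply the classical Jensen formula to $F$ on $\mathbb{D}$:
$$\sum_j \log\frac{1}{|w_j|}\;=\;\frac{1}{2\pi}\int_0^{2\pi}\log|F(e^{i\theta})|\,d\theta\;-\;\log|F(0)|.$$
Every term on the left is nonnegative, and those corresponding to $z_j\in\Omega'$ are bounded below by $\log(1/r)$, so
$$N(\Omega',g)\,\log\frac{1}{r}\;\leq\;\frac{1}{2\pi}\int_0^{2\pi}\bigl|\log|F(e^{i\theta})|\bigr|\,d\theta\;+\;\bigl|\log|g(z_0)|\bigr|.$$

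Then I would pull the boundary integral back to $\partial\Omega$ through $\zeta=\phi^{-1}(e^{i\theta})$. Since $\Omega$, $\Omega'$ and $z_0$ are fixed, and the final constant $C$ is allowed to depend on them, the push-forward of $d\theta$ to $\partial\Omega$ under $\phi^{-1}$ is comparable to arc length, yielding
$$\int_0^{2\pi}\bigl|\log|F(e^{i\theta})|\bigr|\,d\theta\;\leq\;C_1\int_{\partial\Omega}\bigl|\log|g(\zeta)|\bigr|\,|d\zeta|,$$
and combining the three steps produces the desired inequality with $C=C_1/\log(1/r)$.

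The main technical subtlety lies in this last change of variables: uniformly bounding the boundary Jacobian of $\phi^{-1}$ requires $\partial\Omega$ to be sufficiently regular (e.g.\ piecewise $C^1$), which is the implicit hypothesis present in the applications, such as the Breit-Wigner approximation above. A route that sidesteps the need for an explicit conformal extension at the boundary is to work directly with the Green's function $G_\Omega(\cdot,z_0)$ and the harmonic measure $d\omega_{z_0}$ of $\Omega$; the Poisson--Jensen identity
$$\sum_j G_\Omega(z_j,z_0)\;=\;\int_{\partial\Omega}\log|g|\,d\omega_{z_0}\;-\;\log|g(z_0)|,$$
combined with $\inf_{\Omega'}G_\Omega(\cdot,z_0)>0$ and the boundedness of the Poisson-kernel density $d\omega_{z_0}/|d\zeta|$, yields the same conclusion.
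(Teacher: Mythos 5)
Your argument is correct and is essentially the proof the paper points to: the paper itself only cites \cite{JBVBGR} for this lemma, and the proof there is exactly your first route (Riemann map $\phi:\Om\to \mathbb{D}$ with $\phi(z_0)=0$, Jensen's formula, and the observation that zeros in $\Om'$ land in a disc of radius $r<1$). Your remark that the final change of variables on $\p\Om$ needs some boundary regularity (or, equivalently, bounded density of harmonic measure with respect to arc length) is a fair caveat that is implicit in the applications, where $\Om$ can be chosen with smooth boundary.
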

Now, applying this lemma to the function $g(z)={\cal D}_\te(z,r)$,
we deduce from (\ref{eq minoration of Ti D teta (z,s)}), (\ref{eq
majoration of Ti D teta (z,s)}) and Proposition \ref{prop tild D
teta coincide} the upper bound on the number of resonances near
$2bq+\la$ stated in Theorem \ref{Thm upper bound}.

\section{Spectral shift function and resonances}\label{sectSSF}
In this section, we represent the derivative of the spectral shift
function (SSF) near $2bq+\la$ for $q\in\N$ as a sum of a
harmonic measure related to resonances, and the imaginary part of a
holomorphic function. As in \cite{VPMZ}, \cite{VBVP}, \cite{MDMZ},
and \cite{JBVBGR} such representation justifies the Breit-Wigner
approximation and implies a trace formula. We deduce also an
asymptotic expansion of the SSF near $2bq+\la$; in the case of
$v_0=0$, this expansion is given in \cite{JBVBGR}.
For a positive potentials $V$ which decay slowly enough as
$\|X_\perp\|\to \infty$, this expansion yields a remainder estimate
for the
corresponding asymptotic relations obtained in \cite{CFGR}.\\


In order to obtain such a representation formula, the first step is
the factorization of the generalized perturbation determinant. To
this end, we need some complex-analysis results due to Sj\"ostrand,
summarized in the following
\begin{prop}\label{propomegatilde}(see \cite{JS1}, \cite{JS2}) Let
$\Om$ be an open simply connected domain 
in $\C\setminus\{0\}$ 
 such that $\Om\cap\R
 $ is an interval. Let $z\To
 F(z,h), \ 0<h<h_0$, be a family of holomorphic functions in 
 $\Om$ containing
 a number $N(h)$ of zeros. 
We suppose that,\[F(z,h)=O(1)e^{O(1)N(h)},\;\;z\in\Om,\] and for all
$\rho>0$  small enough,  there exists  $C>0$ such that for all
$z\in\Om_{\rho}:=\Om\cap\{\emph{Im} \,z>\rho
\}$ we
have \[|F(z,h)|\geq e^{-CN(h)}.\]
 Then for each open simply connected subset  $\ti\Om\Subset\Om$ there exists $g(.,h)$
 holomorphic in $\ti\Om$ such that
 \[F(z,h)=\prod_{j=1}^{N(h)}(z-z_j)e^{g(z,h)},\;\;\p_zg(z,h)=O(N(h)),\;\;z\in\ti\Om.\]
\end{prop}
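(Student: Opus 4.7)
The plan is to construct $g$ as a single-valued branch of $\log G$, where $G(z,h):=F(z,h)/\prod_{j=1}^{N(h)}(z-z_j)$, and to derive the estimate on $\p_z g = G'/G$ from two-sided bounds on $|G|$. Since $\Om$ is simply connected and $G$ is holomorphic and nonvanishing on $\Om$ (the zeros of $F$ in $\Om$ are exactly the $z_j$, counted with multiplicity), such a logarithm exists and the factorization $F=\prod_j(z-z_j)\,e^{g}$ is automatic; only the bound $\p_z g = O(N(h))$ requires work.

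To estimate $|g|$ on $\ti\Om$, I would cover the closure $\bar{\ti\Om}$ by finitely many closed discs $\bar D_\ell\Subset\Om$, each chosen to meet $\Om_\rho$, and argue on a single such disc $D=D(z_0,R)\Subset\Om$. Enumerating the zeros $\zeta_1,\dots,\zeta_k$ ($k\leq N(h)$) of $F(\cdot,h)$ in $D$ and forming the associated Blaschke product $B_D$ for $D$, the function $h_F:=F/B_D$ is holomorphic and nonvanishing on $\bar D$; since $|B_D|=1$ on $\p D$ and $|B_D|\leq 1$ in $D$, the hypotheses on $F$ yield $|h_F|\leq e^{CN(h)}$ on $D$ (by maximum modulus) and $|h_F|\geq e^{-CN(h)}$ on $D\cap\Om_\rho$. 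The nonnegative harmonic function $CN(h)-\log|h_F|$ is therefore bounded by $2CN(h)$ at a reference point of $D\cap\Om_\rho$, so Harnack's inequality gives $|\log|h_F||\leq C'N(h)$ on any compactly contained subdisc $D'\Subset D$. The Borel-Carath\'eodory inequality promotes this to $|\log h_F|=O(N(h))$ on a further subdisc $D''\Subset D'$ (after fixing the imaginary ambiguity of $\log h_F$ at one reference point to lie in $[0,2\pi)$), and the Cauchy estimate then gives $|\p_z\log h_F|=O(N(h))$ on $\ti\Om\cap D''$. Writing $g=\log h_F+\log\bigl(B_D/\prod_{\zeta_j\in D}(z-\zeta_j)\bigr)-\sum_{z_l\notin D}\log(z-z_l)$ on $D$ accounts for all $N(h)$ zeros of $F$; the two extra terms are logarithms of holomorphic nonvanishing functions whose derivatives are sums of at most $N(h)$ rational expressions uniformly bounded on $\ti\Om$ by the geometry of $\ti\Om\Subset\Om$, and they contribute $O(N(h))$ each.

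The main obstacle is that the lower bound $|F|\geq e^{-CN(h)}$ is only assumed on the upper part $\Om_\rho$, whereas $\ti\Om$ may extend down to $\Om\cap\R$; the Harnack propagation step above is precisely what transfers this lower bound to a full neighborhood of $\ti\Om$. The technical care lies in arranging the chain of discs and intermediate sets so that each disc meets $\Om_\rho$ within $\Om$, and so that the resulting Harnack and Borel-Carath\'eodory constants depend only on the fixed geometry of $\ti\Om\Subset\Om$, independently of $h$ and $N(h)$.
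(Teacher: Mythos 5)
The paper itself gives no proof of this proposition; it is quoted from Sj\"ostrand (\cite{JS1}, \cite{JS2}), and your argument is essentially the one found there and in \cite{JBVBGR}: define $g$ as a branch of $\log G$ with $G=F/\prod_j(z-z_j)$ on the simply connected $\Om$, get two-sided $e^{O(N(h))}$ bounds on $|G|$ by combining the global upper bound with Harnack's inequality seeded in $\Om_\rho$, then conclude by Borel--Carath\'eodory and Cauchy estimates. The per-disc Blaschke renormalization, the maximum-modulus upper bound on $h_F$, and the $O(N(h))$ control of the two correction terms in your decomposition of $g$ are all sound.

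The one step that fails as written is the covering condition: you cannot in general cover $\overline{\ti\Om}$ by discs $\overline{D_\ell}\Subset\Om$ \emph{each of which meets} $\Om_\rho$. Since $\Om$ and $\ti\Om$ are fixed and $\ti\Om$ must reach below the real axis (the resonances lie in $\mathrm{Im}\,z<0$), a point $z\in\ti\Om$ with $\mathrm{Im}\,z<0$ and close to $\p\Om$ only admits discs of radius less than $\mathrm{dist}(z,\p\Om)$, which need not reach $\{\mathrm{Im}\,z>\rho\}$ (take $\Om=D(0,1)$, $\ti\Om=D(0,0.9)$, $z=-0.89i$). On such a disc your Harnack step has no seed, and your closing remark that the chain should be arranged ``so that each disc meets $\Om_\rho$'' cannot be realized. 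The repair is a genuine Harnack chain, and it is cleanest if you drop the Blaschke products and work globally with the harmonic function $u=\log|G|$ on $\Om$. The upper bound $u\le O(N(h))$ on any compact $K\Subset\Om$ follows from the mean-value property: averaging $u(w)\le O(N(h))-\sum_j\log|w-z_j|$ over a circle of fixed radius $r_K$ about $z\in K$ gives $-\frac{1}{2\pi}\int_0^{2\pi}\log|z+r_Ke^{i\theta}-z_j|\,d\theta\le|\log r_K|$ for each $j$, so the a priori unbounded sum contributes only $O(N(h))$. The lower bound $u\ge -O(N(h))$ holds at one fixed point $z_*\in\Om_\rho$ because $\prod_j|z_*-z_j|\le(\mathrm{diam}\,\Om)^{N(h)}$. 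Applying Harnack to the nonnegative harmonic function $O(N(h))-u$ along a fixed chain of overlapping discs joining $z_*$ to $\overline{\ti\Om}$ inside $\Om$ then gives $|u|=O(N(h))$ on a compact neighborhood of $\overline{\ti\Om}$, and the interior gradient estimate for harmonic functions yields $\p_zg=u_x-iu_y=O(N(h))$ on $\ti\Om$, with all constants depending only on the fixed geometry and not on $h$.
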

Let $\Ti\Om\subset\subset\Om$ be open relatively compact subset of
$\C\setminus\{0\}$.
 We assume that these sets are
independent of $r$ and that $\Ti\Om$ is simply connected. Also
assume
that the intersections between $\Ti\Om$ and $\R$ 
is a non empty interval $I$. 
 With
these hypotheses we can obtain the following representation of the
regularized spectral shift function near
$2bq+\la$.
\begin{thm}\label{thm representation formula}\emph{\bf[Representation formula]}
Suppose that $V$ and $v_0$ satisfy the hypotheses of Section
\ref{secassumption}. For $\Ti\Om\subset\subset\Om$ and $I$ as above,
there exists a function $g$ holomorphic in $\Om$, such that for
$\mu\in2bq+\la+rI$, we have
\begin{eqnarray}
\xi_2'(\mu)\!\!&=&\!\!\frac{1}{\pi r}
\emph{Im}g'(\frac{\mu-2bq-\la}{r},r)-\hspace{-4mm}\sum_{\begin{array}{c}
\scriptstyle w\in \textmd{\emph{Res}}(H)\cap2bq+\la+r\Om \\
\scriptstyle \textmd{\emph{Im}}\,  w\neq0
\end{array}}\hspace{-4mm}\frac{-\emph{Im} w}{\pi|\mu-w|^2}-
\hspace{-2mm}\sum_{w\in \textmd{\emph{Res}}(H)\cap2bq+\la+rI}\hspace{-4mm}\de(\mu-w) \nonumber\\
&&-\frac1\pi\emph{Im tr}\left(\p_z{\cal T}_{V,\te}(\mu)\right),
\end{eqnarray}
where $g(z,r)$ satisfies the estimate
\begin{equation}
g(z,r)=O\left(n_+(r,\nu p_qWp_q)|\ln r|+\Ti n_1(r/\nu)+\Ti
n_2(r/\nu)\right)=O(|\ln r|r^{-\frac{2}{\de_\perp}})
\end{equation}
uniformly with respect to $0<r<r_0$ and $z\in \Ti\Om$, with $\Ti
n_p,\ p=1,2,$ defined by (\ref{eq de n tilde p}).
\end{thm}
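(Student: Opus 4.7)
\emph{Plan.} The strategy has three main steps: (i) express $\xi_2'$ as the imaginary part of the logarithmic derivative of $d_\te(z)=\det_2(I+{\cal T}_{V,\te}(z))$; (ii) factor its essential part using Sj\"ostrand's Proposition \ref{propomegatilde} after rescaling by $r$; (iii) take boundary values on the real axis. From the definition (\ref{eq definition de xi2}) and Proposition \ref{prop tild D teta coincide}, $\det_2((H-z)(H_0-z)^{-1})=d_\te(z)$ for $\mbox{Im}\, z>0$, so $\xi_2'(\mu)=\frac1\pi \lim_{\ep\to 0^+}\mbox{Im}\, \p_\mu \ln d_\te(\mu+i\ep)$. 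The identity already used in the proof of Proposition \ref{propresonance=zero},
\begin{equation*}
\p_z \ln d_\te(z) = \mbox{tr}\bigl((I+{\cal T}_{V,\te})^{-1}\p_z {\cal T}_{V,\te}\bigr) - \mbox{tr}(\p_z {\cal T}_{V,\te}),
\end{equation*}
immediately produces the piece $-\frac1\pi\mbox{Im tr}(\p_z {\cal T}_{V,\te})$ of the theorem.

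\emph{Sj\"ostrand factorization.} By Proposition \ref{prop tild D teta coincide}, $d_\te(z)={\cal D}_\te(z,r) F_\te(z)$ with $F_\te(z)=\det((I+{\cal E}_\te(z))e^{-{\cal T}_{V,\te}(z)})$ holomorphic and nonvanishing on $\Om_q$ (since $\|{\cal E}_\te\|<\frac34$ by (\ref{eqestimationE_teta})). Set $\Phi(w,r):={\cal D}_\te(2bq+\la+rw,r)$ for $w\in\Om$. The upper bound of Lemma \ref{lemmajorationdeterminant} and the matching lower bound of the subsequent lemma yield $\Phi(w,r)=O(1)e^{O(N(r))}$ and $|\Phi(w,r)|\geq e^{-CN(r)}$ on $\Om\cap\{\mbox{Im}\,w>\rho\}$, with $N(r)=n_+(r,\nu p_q W p_q)|\ln r|+1$. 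Proposition \ref{propomegatilde} applied on $\Ti\Om\Subset\Om$ then gives
\begin{equation*}
\Phi(w,r)=\prod_{j}(w-w_j)\,e^{\ti g(w,r)}, \qquad \p_w \ti g(w,r)=O(N(r)),
\end{equation*}
where the $w_j=(z_j-2bq-\la)/r$ run over the rescaled resonances in $\Om$.

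\emph{Taking the imaginary part.} Combining both factorizations,
\begin{equation*}
\p_z \ln d_\te(z) = \sum_{z_j}\frac{1}{z-z_j} + \frac1r \p_w \ti g\Bigl(\tfrac{z-2bq-\la}{r},r\Bigr) + \p_z \ln F_\te(z) - \mbox{tr}(\p_z {\cal T}_{V,\te}(z)).
\end{equation*}
Define $g(w,r):=\ti g(w,r)+\ln F_\te(2bq+\la+rw)$, which is holomorphic in $\Ti\Om$; then the second and third terms on the right-hand side collapse to $\frac1r \p_w g(w,r)$ evaluated at $w=(z-2bq-\la)/r$. Taking $\frac1\pi\mbox{Im}$ at $z=\mu+i0$ with $\mu\in 2bq+\la+rI$: by Sokhotski-Plemelj, each complex resonance pole contributes $-\frac{-\mbox{Im}\,z_j}{\pi|\mu-z_j|^2}$, each real resonance contributes $-\de(\mu-z_j)$, and the holomorphic piece contributes $\frac{1}{\pi r}\mbox{Im}\,g'((\mu-2bq-\la)/r,r)$; together with the trace term of Step 1 this is exactly the formula of the theorem.

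\emph{Main obstacle.} The delicate quantitative point is the bound $g(z,r)=O(n_+(r,\nu p_q W p_q)|\ln r|+\ti n_1(r/\nu)+\ti n_2(r/\nu))$. The $n_+|\ln r|$ factor is provided by Sj\"ostrand's proposition; the $\ti n_p$ contributions come from $\ln F_\te$ and require exploiting the decomposition $\tau_q=\tau_{q,1}+\tau_{q,2}$ of Section \ref{secmajoration} to estimate separately the Hilbert-Schmidt and trace-class norms of the small eigenvalues of $p_q W p_q$ (those $\leq r/\nu$) that are absorbed into ${\cal E}_\te$ rather than into the finite-rank piece, these norms being measured precisely by $\ti n_1$ and $\ti n_2$ as defined in (\ref{eq de n tilde p}).
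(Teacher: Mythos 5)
Your overall architecture is the same as the paper's: write $\xi_2'$ as $\frac1\pi\,\mbox{Im}\,\p_\mu\ln d_\te(\mu+i0)$, factor $d_\te$ into a finite-rank-perturbation determinant handled by Sj\"ostrand's Proposition \ref{propomegatilde} (via the two-sided bounds of Section \ref{secmajoration} and the coincidence ${\cal D}_\te={\cal D}_0$ for $\mbox{Im}\,z>0$) times a non-vanishing remainder, then take boundary values. The gap is in the bookkeeping of that remainder $F_\te(z)=\det\bigl((I+{\cal E}_\te(z))e^{-{\cal T}_{V,\te}(z)}\bigr)$. Since $d_\te={\cal D}_\te F_\te$ exactly, your displayed identity $\p_z\ln d_\te=\sum_j(z-z_j)^{-1}+\frac1r\p_w\ti g+\p_z\ln F_\te-\mbox{tr}(\p_z{\cal T}_{V,\te})$ double-counts the last term: the contribution of $e^{-{\cal T}_{V,\te}}$ is already inside $\p_z\ln F_\te$, so you cannot keep $\p_z\ln F_\te$ whole and add $-\mbox{tr}(\p_z{\cal T}_{V,\te})$ again. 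If instead you drop the extra term and set $g=\ti g+\ln F_\te$ as you propose, the estimate on $g$ fails: up to the standard $\det/\det_2$ conversion, $\ln F_\te=\ln\mbox{det}_2(I+{\cal E}_\te)-\mbox{tr}({\cal R}_\te)$, and $-\mbox{tr}({\cal R}_\te)$ contains $\frac1\eta\mbox{tr}(\tau_{q,2})$, which on the rescaled domain ($|\eta|\sim r$) is generically of size $r^{-1}$, far larger than the claimed $O(|\ln r|\,r^{-2/\de_\perp})$ since $\de_\perp>2$. That singular piece is exactly what must be kept \emph{outside} $g$ as the explicit term $-\frac1\pi\mbox{Im tr}\bigl(\p_z{\cal T}_{V,\te}(\mu)\bigr)$ of the theorem; it is not part of the holomorphic error.

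The paper's resolution is the identity $\det\bigl((I+\Ti K)e^{-{\cal T}_{V,\te}}\bigr)=\mbox{det}_2(I+\Ti K)\,e^{-\mbox{tr}({\cal T}_{V,\te}-\Ti K)}$ with $\Ti K=\ti A^>+{\cal E}_\te$, where ${\cal T}_{V,\te}-\Ti K=-\frac1\eta\tau_{q,2}+A_0^>$ is finite rank, followed by the split $-\mbox{tr}\bigl(\p_z({\cal T}_{V,\te}-\Ti K)\bigr)=-\mbox{tr}(\p_z{\cal T}_{V,\te})+\mbox{tr}(\p_z\Ti K)$, each term separately of trace class by Lemma \ref{lemTVholomorphe z in Om_q}. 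Only $g_1=\ln\mbox{det}_2(I+\Ti K)$ and $g_2=\mbox{tr}(\Ti K)$ are absorbed into $g$: one gets $\p_zg_1=O(\Ti n_2(r/\nu))$ from the two-sided bound $\|\Ti K\|_2^2=O(\Ti n_2(r/\nu))$ (a Borel--Carath\'eodory/Harnack step you would also need to make explicit, not just the identification of which eigenvalues enter), and $\p_wg_2=O(\Ti n_1(r/\nu))$ from $\|\tau_{q,1}\|_1=O(r\nu^{-1}\Ti n_1(r/\nu))$ after rescaling. With this correction your argument coincides with the paper's proof.
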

\begin{proof}
First, using the resolvent equation, we have
$$\mbox{det}_2\left((H-z)(H_0-z)^{-1}\right)=\mbox{det}_2\left(I+T_{V,0}(z)\right).$$
Using (\ref{eqdet(1+AB)=det(1+BA)}), the last determinant coincides
with $ d_\te(z)=\mbox{det}_2(I+{\cal T}_{V,\te}(z))$ for
$\te\in\R$, where ${\cal T}_{V,\te}(z)$ is defined in (\ref{eq cal
TVteta}). According to previous section, ${\cal T}_{V,\te}(z)$ is
extended on $\te\in D_\epsilon$ and
$$ d_\te(z)=D_\te(z,r)\det\left((I+\ti A^>(z)+{\cal E}_\te(z))e^{-{\cal T}_{V,\te}(z)}\right)$$
where $D_\te(z,r)$ is defined by (\ref{eqD(z,s)}).

By the properties of $\ti A^>(z)$ (see
(\ref{eqdecompositioncalA>})), for $\Ti K(z)=\ti A^>(z)+{\cal
E}_\te(z)$, the difference ${\cal T}_{V,\te}(z)-\Ti
K(z)=-\frac{1}{\eta}\tau_{q,2}+A_0^>$ is a finite-rank operator.
Using the fact that $\det_2(I+B)=\det(I+B)e^{-\tr B}$ for a
trace-class operator $B$, we have
\begin{equation}
\det\left((I+\ti A^>(z)+{\cal E}_\te(z))e^{-{\cal
T}_{V,\te}(z)}\right)=\mbox{det}_2(I+\Ti K(z))e^{-\tr({\cal
T}_{V,\te}(z)-\Ti K(z))},
\end{equation}
where $\det_2(I+\Ti K(z))$ is a non-vanishing holomorphic function.
Since $\ti A^>(z)$ is holomorphic in $S_2$ and
$$\|\frac{B_q}{r}{\bf1}_{[0,r]}(B_q)\|_2^2=-\int_0^r\frac{u^2}{r^2}dn_+(u,B_q)=\Ti n_2(r),$$
we have
$$\|\Ti K(z)\|_2^2=O(\Ti n_2(r/\nu)),$$
which implies that $|\det_2(I+\Ti K(z))|=O(\exp(\Ti n_2(r/\nu)))$.
Using moreover that $\|\Ti K(z)\|<1$, we have also $|\det(I+\Ti
K(z))|^{-1}=O(\exp(\Ti n_+(r/\nu)))$. Then there exists
$g_1(\cdot,r)$ holomorphic on $\Om$ such that,
$\frac{d}{dz}g_1(z,r)=O(\Ti n_2(r/\nu))$, on $\Ti\Om$, and
$$\mbox{det}_2(I+\Ti K(z))=e^{g_1(z,r)}.$$

We consider now the functions
$$F_{\te}:\ z\in\Om\longmapsto D_\te(z,r).$$
 The functions $F_{\te}$ are holomorphic in $\Om$ and
 $\Ti w\in\Om$ is a zero of $F_{\te}$ if
and only if $z=2bd+\la+\Ti wr$ is a resonance of $H$. Then applying
Proposition \ref{propomegatilde} to $F=F_{\te}$ with $h=r$,
$N(r)=n_+(r,\nu p_qWp_q)|\ln r|$, we obtain existence of functions
$g_{0}$ holomorphic in $\Om$ such that for $z\in\Om$, we have the
following factorization:
\begin{equation}
F_{\te}=\prod_{w\in\textmd{\mbox{Res}}(H)\cap2bq+\la+r\Om}\left(\frac{zr+2bq+\la-w}{r}\right)e^{g_{0}(z,r)},
\end{equation}
with
\begin{equation}
\frac{d}{dz}g_{0}(z,r)=O(n_+(r,\nu p_qWp_q)|\ln r|),
\end{equation}
uniformly with respect to $z\in \Ti\Om$.

Then by definition of $\xi_2$ (see (\ref{eq definition de xi2})),
for $\mu\in2bq+\la+r(\Om\cap\R)$ we obtain
\begin{eqnarray*}
\xi_2'(\mu)\!\!\!&=&\!\!\!\frac{1}{\pi
r}\mbox{Im}\p_z(g_{0}+g_1)(\frac{\mu-2bq-\la}{r},r)-\hspace{-8mm}\sum_{\begin{array}{c}
\scriptstyle w\in \textmd{\mbox{Res}}(H)\cap2bq+\la+r\Om \\
\scriptstyle \textmd{\mbox{Im}}\,  w\neq0
\end{array}}\hspace{-6mm}\frac{-\im w}{\pi|\mu-w|^2}-\hspace{-4mm}\sum_{w\in \textmd{\mbox{Res}}(H)\cap2bq+\la+rI}\hspace{-5mm}\de(\mu-w)\\
&&+\frac{1}{\pi r}\im\tr\left(\p_z\Ti
K(\frac{\mu-2bq-\la}{r})\right)-\frac1\pi\im\tr\left(\p_z{\cal
T}_{V,\te}(\mu)\right).
\end{eqnarray*}
Then, we conclude the proof of Theorem \ref{thm representation
formula} with $g=g_{0}+g_1+g_2$ taking
$$g_2(z,r)=\tr(\Ti
K(z)),$$
which satisfies $\frac{d}{dz}g_2(z,r)=O(\Ti
n_1(r/\nu))$.
\end{proof}
\begin{lem}\label{lemma relation xi et xi2}
 On
$\R\setminus(\{2b\N+\la\}\cup\{2b\N\})$, for $\te\in D_\epsilon^+,\
\emph{Im }\te>0$, we have
\begin{eqnarray}
\xi'=\xi_2'+\frac1\pi\emph{Im tr}\left(\p_z {
T}_{V,\te}(\cdot)\right),
\end{eqnarray}
where $T_{V,\te}(z)=V_\te(H_{0,\te}-z)^{-1}$.
\end{lem}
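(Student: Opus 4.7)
The plan is to start from the abstract identity $\det(I+A)=\det_2(I+A)\,e^{\tr A}$ for trace class $A$, extract its consequence for the two definitions (\ref{eq def xi'}) and (\ref{eqxi'2 dep}) of the derivatives of $\xi$ and $\xi_2$, and then transport the resulting boundary-value of a trace from the physical resolvent to the distorted one via analytic continuation in $\te$.

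More concretely, by subtracting (\ref{eqxi'2 dep}) from (\ref{eq def xi'}) the difference $(\xi'-\xi_2')(f)$ equals $\tr\bigl(\tfrac{d}{d\ep}f(H_0+\ep V)\bigr|_{\ep=0}\bigr)$. I would represent $f(H_0+\ep V)$ via the Helffer--Sj\"ostrand almost-analytic formula
\[
f(H_0+\ep V)=-\frac{1}{\pi}\int_\C \p_{\bar z}\ti f(z)\,(H_0+\ep V-z)^{-1}L(dz),
\]
differentiate in $\ep$ at $\ep=0$, and use cyclicity of the trace together with Lemma \ref{lemresolvent hilbetschmidt} (which provides that $\p_z T_{V,0}(z)=V(H_0-z)^{-2}$ is trace class on $\{\im z>0\}$) to obtain
\[
\tr\Bigl(\tfrac{d}{d\ep}f(H_0+\ep V)\bigr|_{\ep=0}\Bigr)
=\frac{1}{\pi}\int_\C \p_{\bar z}\ti f(z)\,\tr\bigl(\p_z T_{V,0}(z)\bigr)L(dz).
\]
The scalar function $z\mapsto\tr(\p_z T_{V,0}(z))$ is holomorphic off $\si(H_0)$ and satisfies the symmetry $\tr(\p_z T_{V,0}(\bar z))=\overline{\tr(\p_z T_{V,0}(z))}$ (since $V,v_0$ are real). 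A Stokes/Green argument collapses the double integral to the jump across the real axis, yielding on $\R\setminus(\{2b\N+\la\}\cup\{2b\N\})$ the pointwise identity
\[
\xi'(\nu)-\xi_2'(\nu)=\frac{1}{\pi}\im\tr\bigl(\p_z T_{V,0}(\nu+i0)\bigr).
\]

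The last step is to replace $T_{V,0}$ by $T_{V,\te}$ for $\te\in D_\epsilon^+$ with $\im\te>0$. For real $\te$ this is just the unitary invariance of the trace $\tr(\p_z T_{V,\te}(z))=\tr(\p_z T_{V,0}(z))$, holding on $\{\im z>0\}$. Since by Proposition \ref{propH_te de type A} the family $H_\te$ is analytic of type A and by Lemma \ref{lemTVholomorphe z in Om_q} the operator $\p_z T_{V,\te}(z)$ remains trace class and holomorphic on $\Om_q\setminus\{2bq+\la\}$, analytic continuation in $\te$ from the real axis into $D_\epsilon^+$ preserves the identity. For $\im\te>0$, the essential spectrum $\Ga_{q,\te}=2bq+(1+\te)^{-2}[0,\infty[$ rotates into the lower half-plane (since $\im(1+\te)^{-2}<0$), so the boundary value from above $z=\nu+i0$ becomes a genuine value of the holomorphic function $z\mapsto\tr(\p_z T_{V,\te}(z))$ at the real point $\nu$, provided $\nu\notin 2b\N\cup(2b\N+\la)$. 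This gives the desired formula.

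The main obstacles I anticipate are bookkeeping ones rather than conceptual. First, one must verify that $\tr(\p_z T_{V,0}(z))$ behaves well enough as $|\re z|\to\infty$ and as $\im z\to 0$ for the Stokes argument to be applicable; this follows from the Hilbert--Schmidt/trace-class estimates in Lemma \ref{lemresolvent hilbetschmidt} together with the decay of $V$ in (\ref{eqdecroissance de V}), but requires a bit of care at the thresholds $2b\N$ and at the embedded eigenvalues $2b\N+\la$, which is precisely why the identity is stated only off these exceptional sets. Second, the analytic continuation in $\te$ must be performed on a fixed open set in $z$ disjoint from the rotated essential spectrum of $H_{0,\te}$, so one threads $\te$ into $D_\epsilon^+$ while keeping $\nu$ in a common resolvent set; standard arguments from complex dilation (as used in Proposition \ref{propresonance=zero}) take care of this.
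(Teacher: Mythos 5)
Your proposal is correct and follows essentially the same route as the paper: Helffer--Sj\"ostrand representation of $\frac{d}{d\ep}f(H_0+\ep V)|_{\ep=0}$, cyclicity of the trace to identify $\tr((H_0-z)^{-1}V(H_0-z)^{-1})$ with $\tr(\p_z T_{V,\te}(z))$ for real $\te$, the symmetry $\si_-(z)=\overline{\si_+(\bar z)}$, a Green/Stokes collapse to the real axis, and analytic continuation in $\te$ (using Lemma \ref{lemTVholomorphe z in Om_q}) so that for $\im\te>0$ the boundary value becomes a genuine value of a holomorphic function away from $2b\N\cup(2b\N+\la)$. The only blemish is a sign slip in the intermediate step ($\xi'(f)-\xi_2'(f)=-\tr(\frac{d}{d\ep}f(H_0+\ep V)|_{\ep=0})$, with a minus), which does not affect your final identity.
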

\begin{proof}
We follow the proof of \cite[Lemma 8]{JBVBGR}. From (\ref{eqxi'2
dep}), we have only to prove
$$\tr\left(\frac{d}{d\ep}f(H_0+\ep
V)|_{\ep=0}\right)=-\frac1\pi\int_\R f(\rho)\,\mbox{Im tr}\left(\p_z
{
 T}_{V,\te}(\rho)\right) d\rho,$$ for any $f\in
C_0^\infty(\R\setminus(\{2b\N+\la\}\cup\{2b\N\}))$. As in
\cite[Lemma 8]{JBVBGR}, we use the Helffer-Sj\"ostrand formula and
we have
$$\frac{d}{d\ep}f(H_0+\ep
V)|_{\ep=0}=\frac1\pi\int_\C \overline{\p}\Ti
f(z)(H_{0}-z)^{-1}V(H_{0}-z)^{-1} L(dz),$$ for $\Ti f\in
C_0^\infty(\R^2)$ an almost analytic extension of $f$, (i.e. $\Ti
f_{|\R}=f$ and $\overline{\p}_\la\Ti f(\la)=O(|\mbox{Im}\la|^\infty)$)
and $L(dz)$ denotes the Lebesgue measure on $\C$.
\medskip

Let us now define $$\si_\pm(z)=\tr((H_{0}-z)^{-1}V(H_{0}-z)^{-1}),\
\ \ \pm\im(z)>0.$$ The functions $\si_\pm(z)$ satisfy the relation
\begin{eqnarray}\label{eqsigma pm schrodinger}
 \si_-(z)=\overline{\si_+(\overline{z})},\ \ \ \im(z)<0.
\end{eqnarray}
For $\te\in\R$, the operator
$$(H_{0}-z)^{-1}V(H_{0}-z)^{-1},$$
is unitarly equivalent to the operator
$$(H_{0,\te}-z)^{-1}V_\te(H_{0,\te}-z)^{-1}.$$
Using the cyclicity of the trace, we deduce
\begin{eqnarray}\label{eqsigmapm}\si_\pm(z)=\tr(\p_z{
T}_{V,\te}(z)
) ,\ \ \pm\im(z)>0,\ \te\in\R.\end{eqnarray}
From Lemma \ref{lemTVholomorphe z in Om_q}, the function $\te\longrightarrow\p_z{
 T}_{V,\te}(z)$ is holomorphic on $D_\epsilon^+$ with value in the trace class for $\im(z)>0$. 
 Then, (\ref{eqsigmapm}) is also available for $\te\in D_\epsilon^+$ and taking $\im\te>0$, $z\longrightarrow\si_+(z)$ can be extended to $\R\setminus(\{2b\N+\la\}\cup\{2b\N\})$. According to (\ref{eqsigma pm schrodinger}), $\si_-(z)$ satisfies the same property of $\si_+(z)$.
\medskip

Hence, $\frac{d}{d\ep}f(H_0+\ep V)|_{\ep=0}$ is of trace class, and
$$\tr\left(\frac{d}{d\ep}f(H_0+\ep
V)|_{\ep=0}\right)=\frac1\pi\int_{\im(z)>0} \overline{\p}\Ti
f(z)\si_+(z) L(dz)+\frac1\pi\int_{\im(z)<0} \overline{\p}\Ti
f(z)\si_-(z) L(dz).$$ Then the Green formula yields the lemma.
\end{proof}
\medskip

We will deduce Theorem \ref{thm Breit-Wigner} from Theorem \ref{thm
representation formula} by using the previous lemma and the
cyclicity of the trace.
\bigskip

\emph{Acknowledgments}. The author is grateful to V. Bruneau and
J.-F. Bony for many helpful discussions. We also thank the French
ANR (Grant no. JC0546063) for the financial support.


\begin{thebibliography}{99}
\bibitem{JAJC} J. Aguilar  and  J.-M. Combes, \emph{A class of analytic perturbations for one-body Schr\"odinger Hamiltonians}.  Comm. Math. Phys.  22  (1971), 269--279.
\bibitem{MAPBVBCFGR} M.A. Astaburuaga, P. Briet, V. Bruneau, C. Fernandez, G. Raikov, \emph{ Dynamical resonances and SSF singularities for a
magnetic Schr\"odinger operator.}  Serdica Math. J.  34  (2008),  no. 1, 179--218.
\bibitem{JAIHBS}J. Avron, I. Herbst, B. Simon, \emph{Schr\"odinger operator with magnetic field.} I. General interactions, Duke Math. J. 45 (1978), 847-883.
\bibitem{JBVBGR} J.-F. Bony, V. Bruneau and G. Raikov, \emph{Resonances and spectral shift function near the Landau levels}.  Ann. Inst. Fourier (Grenoble)  57  (2007),  no. 2, 629--671.
\bibitem{JBJS}J.-F. Bony and J. Sj\"ostrand, \emph{Trace formula for resonances in small domains}, J. Funct. Anal. 184 (2001), 402-418.
\bibitem{VBVP} V. Bruneau and V. Petkov, \emph{Meromorphic continuation of the spectral shift function}, Duke Math.
J. 116 (2003), 389-430.
\bibitem{MDJS} M. Dimassi and J. Sj\"ostrand, \emph{Spectral asymptotics in the Semi-classical Limit}, London Math. Soc. Lecture Note Ser.268, Univ.press, Cambridge, 1990.
\bibitem{MDMZ} M. Dimassi and M. Zerzeri, \emph{A local trace formula for resonances of perturbed periodic Schr\"odinger operators}, J.
Funct. Anal. 198 (2003), 142-159.
\bibitem{CFGR} C. Fern\'andez, and G. D. Raikov, \emph{On the singularities of the magnetic spectral shift function at the Landau levels}, Ann. Henri Poincar\'e 5 (2004), 381-403.
\bibitem{WH} W. Hunziker, \emph{Distortion analyticity and molecular resonance curves.}  Ann. Inst. H. Poincar\'e Phys. Th\'eor.  45  (1986),  no. 4, 339--358.
\bibitem{TK} T. Kato, \emph{Perturbation Theory, Springer-Verlag}, New York, 1966.
\bibitem{AK} A. Khochman, \emph{Resonances and Spectral Shift Function for the semi-classical Dirac operator}, Reviews in
Mathematical Physics, Vol. 19  No. 10 (2007) 1-45.
\bibitem{MGK} M. G. Krein, \emph{On perturbation determinants and a trace formula for unitary and self-adjoint
operator}, (Russian) Dokl. Akad. Nauk SSSR 144 (1962), 268-271.
\bibitem{PLRP}P.D. Lax and R.S. Phillips, \emph{Scattering Theory}, Academic Press, New York (1967).
\bibitem{VPMZ} V. Petkov and M. Zworski, \emph{Semi-classical estimates on the
scattering determinant}, Annales H. Poincar\'e, 2 (2001), 675-711.
\bibitem{GR} G.D. Raikov, \emph{Eigenvalue asymptotics for the Schr\"odinger operator with homogeneous magnetic potential and decreasing electric potential. I. Behaviour near the essential
spectrum tips}, Commun. P.D.E. 15 (1990), 407-434; Errata: Commun.
P.D.E. 18 (1993), 1977-1979.
\bibitem{GRSW} G.D. Raikov and S. Warzel, \emph{Quasi-classical versus non-classical spectral asymptotics for magnetic Schroedinger operators with decreasing electric potentials}. Reviews in Mathematical Physics 14 (2002) 1051-1072.
\bibitem{JS1} J. Sj\"ostrand, \emph{A trace formula and review of some estimates for
resonances, in Microlocal analysis and spectral theory (Lucca,
1996)}, 377-437, NATO Adv. Sci. Inst. Ser. C Math. Phys. Sci., 490,
Dordrecht, Kluwer Acad. Publ.1997.
\bibitem{JS2} J. Sj\"ostrand, \emph{Resonances for bottles and trace formulae}, Math. Nachr. 221 (2001), 95-149.
\bibitem{BRV} B.R. Vainberg, \emph{The analytic properties of the resolvent for a certain class of bundles of operators}. (Russian)  Mat. Sb. (N.S.)  77 (119)  1968 259--296.
\bibitem{XPW} X.P. Wang, \emph{Barrier resonances in strong magnetic fields}. Partial Differential Equations 17 (1992), no. 9-10, 1539--1566.
\end{thebibliography}
\end{document}